\theoremstyle{plain}
\newtheorem{theorem}{Theorem}
\renewcommand*{\top}{%
  {\mathpalette\@transpose{}}%
}
\newcommand*{\@transpose}[2]{%
  \raisebox{\depth}{$\m@th#1\scriptscriptstyle\mathsf{T}$}%
}
\begin{document}
\begin{frontmatter}
\title{On a geometric graph-covering problem related to optimal safety-landing-site location\tnoteref{label0}}
\tnotetext[label0]{A short preliminary version appeared in the proceedings of ISCO 2024; see \cite{Covering_ISCO}.}

\author[label1]{Claudia D'Ambrosio}
\author[label2]{Marcia Fampa}
\author[label3]{Jon Lee}
\author[label2]{Felipe Sinnecker}

\affiliation[label1]{organization={Ecole Polytechnique},
            country={France}}
            
\affiliation[label2]{organization={Universidade Federal do Rio de Janeiro},
            country={Brazil}}

\affiliation[label3]{organization={University of Michigan},
            city={Ann Arbor},
            state={MI},
            country={USA}}

            
\begin{abstract}
We propose integer-programming formulations for an optimal safety-landing site (SLS) location problem that arises in the design of urban air-transportation networks. We first develop a set-cover based approach for the case where the candidate location set is finite and composed of points, and we link the problems to solvable cases that have been studied. We then use a mixed-integer second-order cone program to model the situation where the locations of SLSs are restricted to convex sets only. Finally, we introduce \emph{strong fixing}, which we found to be  very effective in reducing the size of integer programs.
\end{abstract}

\begin{keyword}
urban air mobility
\sep safety landing site
\sep set covering 
\sep 0/1 linear programming 
\sep balanced matrix 
\sep unit-grid graph 
 mixed-integer nonlinear optimization
\sep mixed-integer second-order cone program
\sep variable fixing 
\end{keyword}

\end{frontmatter}

\section*{Introduction}
In the last few years, different actors all around the world have been pushing for the development of Urban Air Mobility (UAM). The idea is to integrate into the current transportation system, new ways to move people and merchandise. In particular, drones are already a reality, and they have the potential to be highly exploited for last-mile deliveries (for example, Amazon, UPS, DHL, and FedEx, just to mention a few). Concerning passenger transportation, several companies are competing to produce the first commercial electric Vertical Take-Off and Landing (eVTOLs) vehicles, which will be used to move passengers between skyports of sprawling cities. Several aspects have to be taken into account for this kind of service, the most important one being safety. Air-traffic management (ATM) provides and adapts flight planning to guarantee a proper separation of the trajectories of the flights; see e.g., \cite{PDDH23,CDLP21,PD22}. In the case of UAM, some infrastructure has to be built to provide safe landing locations in case of failure or damage of drones/eVTOLs. These locations are called ``Safety Landing Sites'' (SLSs) and should be organized  to cover the trajectory of eVTOLs for emergency landings at any position along flight paths.

In what follows, we study the optimal placement of SLSs in the air-transpor\-tation network. We aim at installing the minimum-cost set of SLSs, such that all the drones/eVTOLs trajectories are covered. We show an example of in Fig. \ref{fig:example_SLScover}. It represents an aerial 2D view of a part of a city, where the rectangles are the roofs of existing buildings. The black crosses represent potential sites for SLSs. The two red segments are the trajectory of the flights in this portion of the space. The trajectory is fully covered thanks to the installation of 3 SLS over 5 potential sites, namely the ones corresponding to the center of the green circles. The latter represents the points in space that are at a distance that is smaller than the safety distance for an emergency landing. Note that every point along the trajectory is inside at least one circle, thus the trajectory is fully covered.

\begin{figure}
\centering
\includegraphics[width=0.25\textwidth]{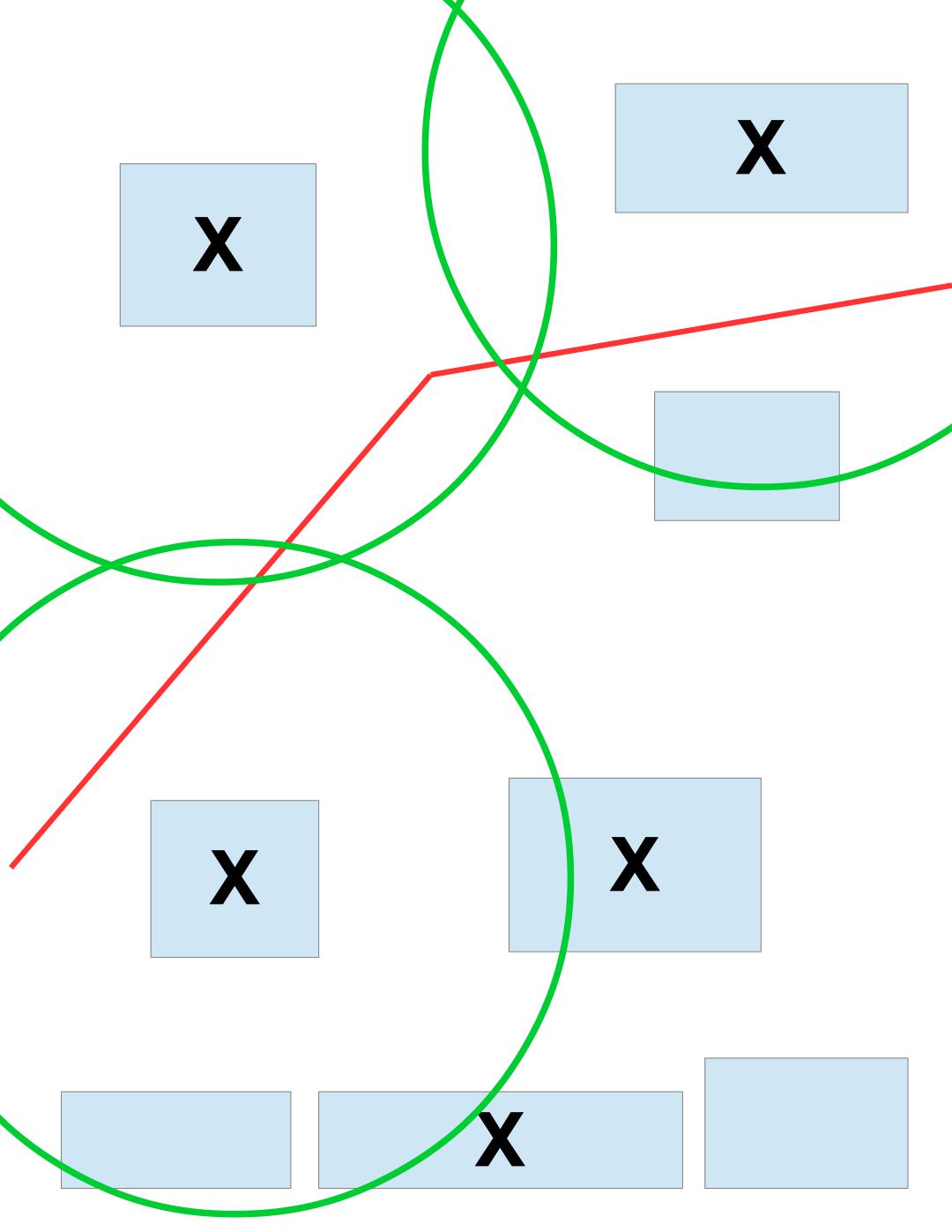}
\caption{Example of full covering provided by installing 3 SLSs}\label{fig:example_SLScover}
\end{figure}

The problem of finding an optimal placement of SLSs has not received a lot of attention. In fact, there appears to be no published work on the variants that we are considering. In the literature, we can find the master's thesis of Xu \cite{xu:hal-03286640}, where he studied the problem of SLS placement coupled with the routing problem of drones/eVTOLs for each origin-destination pair. The set of potential SLSs location is finite and they have a constraint on the budget of SLSs to be installed. The potential SLSs are assumed to fully cover a subset of the arcs of the considered network, i.e., a partial covering is not allowed. In \cite{pelegrín2023continuous}, Pelegr\'{\i}n and Xu consider a variant of the problem which can be formulated as a continuous covering problem. In particular, they model the problem as a set-covering/location problem so that both candidate locations and demand points are continuous on a network. The demand points are any point along the potential path of any eVTOL.
In this work, we address two versions of the problem where  the demand points are continuous on the network and the candidate-location set is  not restricted to be in the network. In the first version, the  candidate-location set is finite and composed of points; see e.g., Fig.~\ref{fig:example_SLScover}. We then address the case where  the locations of SLSs are restricted to convex sets only. 

More attention has been accorded to the optimal placement of ``vertiports''  (the short term for ``vertical ports''); see e.g., \cite{Villa2020,SMA2021}. However, their location depends on the estimated service demand and, based on the decisions made on the vertiport location, the UAM network is identified. In contrast, we suppose that these decisions were already made. In fact, 
despite the scarcity of literature on the topic, the main actors in the UAM field assert that pre-identified emergency landing sites are necessary to guarantee safety in UAM; see  e.g., \cite{NASA2021,FAA_NASA_2023}.

\medskip
\noindent {\bf Organization and contributions}.
In \S\ref{sec:covergiven}, we describe our first new mathematical model for the optimal safety-landing-site (SLS) location problem, a generally NP-hard minimum-weight set-covering problem. 
In \S\ref{sec:matrices}, we see what kinds of set-covering matrices can arise from our setting, linking to the literature on ideal matrices. 
In \S\ref{sec:solveable}, we identify three classes of efficiently-solvable cases
for our setting.
In \S\ref{sec:strongfix}, toward practical optimal solution of instances,  we introduce ``strong fixing'', to enhance the classical technique of reduced-cost fixing. 
In \S\ref{sec:comp}, we present results of computational experiments, demonstrating the value of strong fixing for reducing model size and as a useful tool for solving difficult instances to optimality. 
In \S\ref{sec:cp2}, we propose a mixed-integer second-order cone program (MISOCP) to model the more difficult version  of the problem where the locations of the potential SLSs are not prescribed, they are only restricted to belong to given convex sets. 
In \S\ref{sec:strongfixCP2}, we discuss extensions of strong fixing to our MISOCP. 
In \S\ref{sec:compCP2}, we present results of computational experiments, demonstrating that the MISOCPs, although more difficult to solve than the set-covering problems, can lead to significant savings on the overall cost of the SLS placements. The results demonstrate once more the effectiveness of the strong-fixing procedure in reducing model size.
In \S\ref{sec:conc}, we identify some potential next steps. 

\medskip

\noindent{\bf Some relevant related literature.} There is a huge literature on covering problems,
including approximation algorithms (see, for example, \cite{Takazawa}), and a
wide variety of covering applications
(some with a geometric flavor; see, for example,
\cite{Acharyya,BRISKORN,Mao}). We will not attempt to broadly survey these. Rather, we will focus more exhaustively on highlighting related 
literature on covering from the integer-programming viewpoint, and some related geometric covering problems, although none of this is needed for our development. With regard to covering integer-linear programs and associated polyhedra and cutting planes, some  
important references are the fundamental book \cite{CornBook}, the survey \cite{bentz}, and the more specific papers \cite{Balas1,Balas2,Balas1980a,Balas1980b,BellmoreRatliff,Claussen_2022,Sassano,Sass2,Sass3,Sassano, CornNov,Aguilera,Lamothe,Argiroffo,Pashkovich,Bodur}. Finally, we wish to mention a couple of old papers, while not focused on integer programming approaches, do consider some interesting algorithmic ideas:
\cite{BEASLEY198785,Lawler}.


\section{Covering edges with a subset of a  finite set of balls: the \ref{scp} problem}\label{sec:covergiven}
We begin with a formally defined geometric optimization problem.
Let $G$ be a straight-line embedding of a graph in $\mathbb{R}^d$, $d\geq 1$ (although our main interest is $d=2$, with $d=3$ possibly also having some applied interest), where we denote  the vertex set of $G$ by $\mathcal{V}(G)$,
and the edge set of $G$ by $\mathcal{I}(G)$, which is a finite set of intervals, which we regard as \emph{closed}, thus containing its end vertices. 
Note that an interval can be a single point (even though this might not be useful for our motivating application). 
We are further given a finite set $N$ of $n$ points in $\mathbb{R}^d$, a weight function $w:N\rightarrow \mathbb{R}_{++}$\,, and  covering radii $r:N\rightarrow \mathbb{R}_{++}$ (we emphasize that points in $N$ may have differing covering radii). 
A point $x\in N$ $r(x)$-covers all points in the $r(x)$-ball 
$B(x,r(x)):=\{ y\in \mathbb{R}^d ~:~ \| x-y \|_2 \leq r(x)\}$. 
A subset $S\subset N$ $r$-covers $G$ if every point $y$ in every edge $I\in \mathcal{I}(G)$ is $r(x)$-covered by some point $x\in S$.
We may as well assume, for feasibility, that $N$ $r$-covers $G$. 
Our goal is to find a minimum $w$-weight $r$-covering of $G$. 

Connecting this geometric problem with our motivating application,
we observe that any realistic road network can be approximated 
to arbitrary precision by a  straight-line embedded graph,
using extra vertices, in addition to road junctions; this is just the standard technique of piecewise-linear approximation of curves. 
The point set $N$ corresponds to the set of potential SLSs. 
In our application, a constant radius for each SLS is rather natural,
but our methodology does not require this. We also allow for 
cost to depend on SLSs, which can be natural if sites are rented, for example. 

We note that the intersection of $B(x,r(x))$ and an edge $I\in \mathcal{I}(G)$ is a closed subinterval (possibly empty) of $I$
which we denote by $I(x,r(x))$.
Considering the nonempty $I(x,r(x))$ as $x\in N$ varies, we get a finite collection $C(I)$ of nonempty closed sub-intervals of $I$.  
The collection $C(I)$ induces a finite collection $\mathcal{C}(I)$ of maximal closed subintervals such that every point $y$ in every 
subinterval in $\mathcal{C}(I)$ is $r$-covered by  precisely the same subset of $N$.  
$\mathcal{C}(I)$ is the natural set of subintervals created from the interval $I$ as it is cut up by the boundaries of the balls intersecting it. 

With all of this notation, we can re-cast  the problem of finding a minimum $w$-weight $r$-covering of $G$ as the 0/1-linear optimization problem 
\begin{align*}
    \min~ &\sum_{x\in N}  w(x) z(x)\\ 
   \mbox{s.t.}~ &\sum_{\substack{x\in N ~:~ \\ J \subset I(x,r(x))}} 
   z(x) \geq 1, ~\forall~ J \in \mathcal{C}(I),~ I\in \mathcal{I}(G);\tag{CP}\label{CP1}\\
   & z(x)\in \{0,1\},~ \forall~ x\in N,
\end{align*}
where $z(x)$ is an indicator variable for choosing the ball indexed by the point $x\in N$.
The constraints simply enforce that each closed sub-interval is covered by some selected ball.
We have $|\mathcal{C}(I)|\leq 1+2n$, for each edge 
$I\in \mathcal{I}(G)$, because each of the $n$ balls can intersect each edge at most once.
Therefore, the
number of covering constraints, which we will denote by $m$,  is
at most $(1+2n)|\mathcal{I}(G)|$. 
Of course we can view this formulation in matrix format as 
\begin{equation}\label{scp}\tag{SCP}
\min\{w^\top z ~:~ Az\geq \mathbf{e},~ 
z\in \{0,1\}^n\}
\end{equation}
for an appropriate 0/1-valued $m\times n$ matrix $A$,
and it is this view that we mainly work with in what follows.

The problem is already NP-Hard for $d>1$, when all balls have identical radius,
the weights are all unity, $N=\mathcal{V}(G)$,
and the edges of $G$ are simply the points in $N$; see \cite[Thm. 4]{FOWLER1981133}.
Of course, this type of graph (with only degenerate edges) is not directly relevant to our motivating application, and anyway we are 
aiming at exact algorithms for practical instances of moderate size. 

\section{What kind of constraint matrices for \ref{scp} can we get?}\label{sec:matrices}

There is a big theory on when 
set-covering LPs have 
integer optima (for all objectives).
It is the theory of ``ideal'' matrices; see \cite{CornBook}.  A 0/1 matrix is \emph{balanced} if it does not contain a square submatrix of
odd order with two ones per row and per column. A 0/1 TU (``totally unimodular''; see \cite[p. 540]{NWbook}, for example) matrix cannot have such a submatrix (which has determinant $\pm2$),
so 0/1 TU matrices are a subclass of 0/1 
balanced matrices.

Berge \cite{Berge} showed that, if $A$ is balanced, then both the packing and covering systems associated with $A$ 
have integer vertices. Fulkerson, Hoffman, and Oppenheim  \cite{Fulkerson1974}
showed that, if $A$ is balanced, then the covering system is TDI (``totally dual integral''; see \cite[p. 537]{NWbook}, for example). 
So balanced 0/1 matrices are a subclass of ideal 0/1 matrices.

We can observe that the matrices that can arise for us are not generally balanced, already for a simple example, depicted in Fig. \ref{fig:5star}; the drawing is for $n=5$, but it could have been for any 
odd  $n\geq 3$. 
The edges of the graph are indicated with red. The points of $N$ are at the midpoints of the black lines (which are not themselves edges). The (green) circle for each point of $N$ goes through the center. It is easy to see that edges are not subdivided by circles, and each circle covers a paper of edges in a cyclic fashion. The constraint matrix of the covering problem is an odd-order (5 in this case) 0/1 matrix violating the definition of balanced.
In fact, the matrix is not even ideal as the
covering LP has a
fractional 
extreme point (in fact with all components equal to 
$\frac{1}{2}$).

\begin{figure}[!ht]

\centering

$\vcenter{\hbox{\includegraphics[height=1.2in]{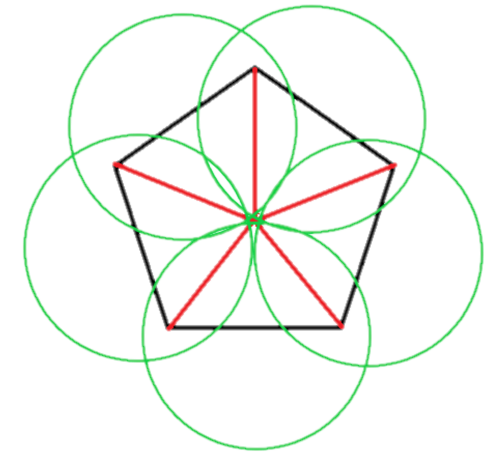}}}$
\qquad
$
\mathcal{C}^2_5 := 
\begin{pNiceMatrix}
1 & 1 & 0 & 0 & 0 \\
0 & 1 & 1 & 0 & 0 \\
0 & 0 & 1 & 1 & 0 \\
0 & 0 & 0 & 1 & 1 \\
1 & 0 & 0 & 0 & 1 \\
\end{pNiceMatrix}
$

\caption{Instance with a non-ideal covering matrix}\label{fig:5star}
\end{figure}

We can also get a counterexample to idealness for the covering matrix with respect to  unit-grid graphs. 
In particular, it is well known that the  ``circulant matrix''
of Fig. \ref{fig:C38matrix} is non-ideal, see \cite{CornNov}.

\tikzset{highlight/.style={red!60!white, ultra thick}}

\begin{figure}[!ht]
\centering
\[
\mathcal{C}^3_8 := 
\begin{pNiceMatrix}
\CodeBefore [create-cell-nodes]
\Body
    1 & 1 & 1 & 0 & 0 & 0 & 0 & 0 \\
    0 & 1 & 1 & 1 & 0 & 0 & 0 & 0 \\
    0 & 0 & 1 & 1 & 1 & 0 & 0 & 0 \\
    0 & 0 & 0 & 1 & 1 & 1 & 0 & 0 \\
    0 & 0 & 0 & 0 & 1 & 1 & 1 & 0 \\
    0 & 0 & 0 & 0 & 0 & 1 & 1 & 1 \\
    1 & 0 & 0 & 0 & 0 & 0 & 1 & 1 \\
    1 & 1 & 0 & 0 & 0 & 0 & 0 & 1 \\
\end{pNiceMatrix}
\]
\caption{Non-ideal circulant matrix}\label{fig:C38matrix}
\end{figure}
Now, we can realize this matrix from an
8-edge unit-grid graph (or you may prefer to see it as a 4-edge unit-grid graph), see Fig. \ref{fig:C38}, and eight well-designed
covering disks, each covering an ``L'', namely 
$\{a,b,c\}$,
$\{b,c,d\}$,
$\{c,d,e\}$,
$\{d,e,f\}$,
$\{e,f,g\}$,
$\{f,g,h\}$,
$\{g,h,a\}$,
$\{h,a,b\}$.
As a sanity check, referring to Fig. \ref{fig:fivematrix}, 
 we see that $\mathcal{C}^3_8$ is
not balanced.

\begin{figure}[!ht]
\centering
\includegraphics[height=2.6in]{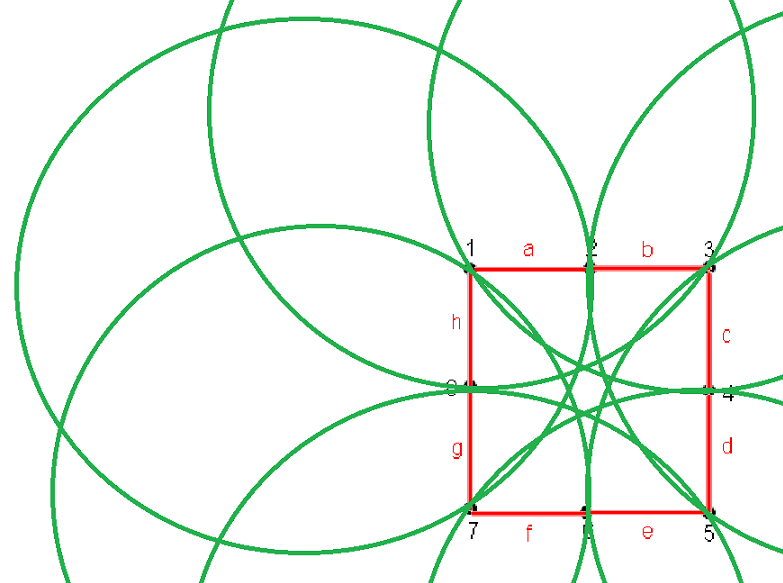}
\caption{Yields the non-ideal covering matrix $\mathcal{C}^3_8$\,.}\label{fig:C38}
\end{figure}

\tikzset{highlight/.style={red!60!white, ultra thick}}

\begin{figure}[!ht]
\centering
\[
\begin{pNiceMatrix}
\CodeBefore [create-cell-nodes]
 \tikz \draw[highlight] (2-1.west) -- (2-8.east) ;
 \tikz \draw[highlight] (5-1.west) -- (5-8.east) ;
 \tikz \draw[highlight] (7-1.west) -- (7-8.east) ;
 \tikz \draw[highlight] (1-2.north) -- (8-2.south) ;
 \tikz \draw[highlight] (1-5.north) -- (8-5.south) ;
 \tikz \draw[highlight] (1-7.north) -- (8-7.south) ;
\Body
    1 & 1 & 1 & 0 & 0 & 0 & 0 & 0 \\
    0 & 1 & 1 & 1 & 0 & 0 & 0 & 0 \\
    0 & 0 & 1 & 1 & 1 & 0 & 0 & 0 \\
    0 & 0 & 0 & 1 & 1 & 1 & 0 & 0 \\
    0 & 0 & 0 & 0 & 1 & 1 & 1 & 0 \\
    0 & 0 & 0 & 0 & 0 & 1 & 1 & 1 \\
    1 & 0 & 0 & 0 & 0 & 0 & 1 & 1 \\
    1 & 1 & 0 & 0 & 0 & 0 & 0 & 1 \\
\end{pNiceMatrix}
\]
\caption{$\mathcal{C}^3_8$ is
not balanced.}\label{fig:fivematrix}
\end{figure}

\section{Solvable cases of \ref{scp}}\label{sec:solveable}

As 0/1 covering linear programs are NP-hard in the worst case, it is natural to consider what cases of our problem are efficiently solvable. In what follows, we present three solvable cases of \ref{scp}.

\subsection{When \texorpdfstring{$G$}{G} is a unit-grid graph in \texorpdfstring{$\mathbb{R}^d$, $d\geq 2$, $N\subset  \mathcal{V}(G)$, and 
 $1\leq r(x)<\sqrt[d]{5/4}$, for all $x\in N$}{}}\label{sec:unit}
 
 Briefly, a \emph{unit-grid graph}
 is a finite subgraph of the standard integer lattice graph.
 For this case, we will observe that our minimum-weight $r$-covering problem on $G$ is equivalently an ordinary minimum-weight vertex covering problem on $G$, with vertices
 in $\mathcal{V}(G)\setminus N$ disallowed. 
 Choosing a vertex $x$ as a covering vertex fully $r$-covers all of its incident edges
 (because $r(x)\geq 1$), but it  will not $r$-cover the midpoint of any other edge 
 (because $r(x)<\sqrt[d]{5/4}$, which is the minimum distance between $x$ and the midpoint of an edge that is not incident to $x$). The only way to $r$-cover a midpoint of an edge is to choose one of its end-points, in which case the entire edge is $r$-covered (because $r(x)\geq 1$). 
 Fig. \ref{fig:grid} illustrates this in $\mathbb{R}^2$: 
 If the disk centered at the cyan {\color{cyan}\Large$\times$} is selected, 
 it fully covers the three adjacent edges (also indicated in cyan), and it does not cover the mid-point of any other edge
 (we indicate the mid-points that are just out of reach with cyan {\color{cyan}$\bullet$}'s). The only way to cover the mid-point of any edge, and then in fact cover the entire edge, is to pick a disk centered at one of its end-points.

\begin{figure}[ht!]
 \begin{center}
\begin{tikzpicture}
\bf\color{black}\tiny
\node[circle, text=cyan, minimum size=1cm]    (e1)  at ( -1.3, -1.18 ) {{\Large$\times$}};

\draw[red, thick] (-5.3,-1.2) -- (2.7,-1.2);
\draw[red, thick] (-5.3,-1.2) -- (-5.3,0.8);
\draw[red, thick] (-3.3,-5.2) -- (-3.3,0.8);
\draw[red, thick] (-3.3,-3.2) -- (-1.3,-3.2);
\draw[red, thick] (-1.3,-3.2) -- (-1.3,-1.2);
\draw[red, thick] (-3.3,0.8) -- (-1.3,0.8);
\draw[red, thick] (0.7,0.8) -- (0.7,-5.2);
\draw[red, thick] (-1.3,-5.2) -- (0.7,-5.2);
\draw[red, thick] (2.7,-1.2) -- (2.7,-3.2);

\draw[cyan, thick] (-3.3,-1.16) -- (-1.25,-1.16);
\draw[cyan, thick] (-3.3,-1.24) -- (-1.25,-1.24);

\draw[cyan, thick] (-1.25,-1.16) -- (0.7,-1.16);
\draw[cyan, thick] (-1.25,-1.24) -- (0.7,-1.24);

\draw[cyan, thick] (-1.25,-1.16) -- (-1.25,-3.2);
\draw[cyan, thick] (-1.35,-1.16) -- (-1.35,-3.2);
\draw[color=cyan, fill=cyan, thick](-3.3,-0.2) circle (0.05);
\draw[color=cyan, fill=cyan, thick](0.7,-0.2) circle (0.05);
\draw[color=cyan, fill=cyan, thick](-3.3,-2.2) circle (0.05);
\draw[color=cyan, fill=cyan, thick](0.7,-2.2) circle (0.05);
\draw[color=cyan, fill=cyan, thick](-2.3,-3.2) circle (0.05);
\draw[color=cyan, fill=cyan, thick](-2.3,0.8) circle (0.05);

\draw[color=black, fill=black, thick](-5.3,-1.2) circle (0.05);
\draw[color=black, fill=black, thick](-3.3,-1.2) circle (0.05);
\draw[color=black, fill=black, thick](-1.3,-1.2) circle (0.05);
\draw[color=black, fill=black, thick](0.7,-1.2) circle (0.05);
\draw[color=black, fill=black, thick](2.7,-1.2) circle (0.05);
\draw[color=black, fill=black, thick](-5.3,0.8) circle (0.05);
\draw[color=black, fill=black, thick](-3.3,0.8) circle (0.05);
\draw[color=black, fill=black, thick](-1.3,0.8) circle (0.05);
\draw[color=black, fill=black, thick](0.7,0.8) circle (0.05);
\draw[color=black, fill=black, thick](-3.3,-3.2) circle (0.05);
\draw[color=black, fill=black, thick](-1.3,-3.2) circle (0.05);
\draw[color=black,fill=black, thick](0.7,-3.2) circle (0.05);
\draw[color=black, fill=black, thick](2.7,-3.2) circle (0.05);
\draw[color=black, fill=black, thick](-3.3,-5.2) circle (0.05);
\draw[color=black, fill=black, thick](-1.3,-5.2) circle (0.05);
\draw[color=black, fill=black, thick](0.7,-5.2) circle (0.05);

\end{tikzpicture}
\caption{grid graph}\label{fig:grid}
\end{center}
\end{figure}

The efficient solvability easily follows, because unit-grid graphs are bipartite, 
 and the ordinary formulation of minimum-weight vertex covering 
 (with variables corresponding to vertices in $\mathcal{V}(G)\setminus N$ set to 0) has a totally-unimodular constraint matrix; so the problem is efficiently solved by linear optimization. 
 
\subsection{When \texorpdfstring{$G$}{G} is a path intersecting each ball on a subpath}

When $G$ is a \emph{straight} path  (in any dimension $d\geq 1$), 
ordering the subintervals of $\mathcal{C}(I)$
naturally, as we traverse the path (see Fig. \ref{fig:straight} for an illustration), we can see that in this case the constraint matrix for \ref{scp} is a (column-wise) consecutive-ones matrix, and hence is totally unimodular (and so \ref{scp} is polynomially solvable is such cases).
In fact, this is true as long as the path (not necessarily straight) intersects $B(x,r(x))$
on at most one subpath, for each $x\in N$. For example, if 
$G$ is monotone in each coordinate, then $G$ enters and leaves each ball  at most once each. 

\begin{figure}[ht]
\begin{center}
\begin{tikzpicture}
\bf\color{black}\tiny
\draw[red, thick] (0,0) -- (10,0);

\draw[color=green!60!blue, thick](1,0) circle (1.8);
\draw[color=green!60!blue, thick](2.25,0) circle (0.5);
\draw[color=green!60!blue, thick](2.7,0) circle (1.8);
\draw[color=green!60!blue, thick](4.5,0) circle (1.1);
\draw[color=green!60!blue, thick](6,0) circle (1.3);
\draw[color=green!60!blue, thick](7.7,0) circle (1.5);
\draw[color=green!60!blue, thick](9.5,0) circle (1);

\draw[color=black, fill=black, thick](0,0) circle (0.05);
\draw[color=black, fill=black, thick](1,0) circle (0.05);
\draw[color=black, fill=black, thick](2,0) circle (0.05);
\draw[color=black, fill=black, thick](2.5,0) circle (0.05);
\draw[color=black, fill=black, thick](4,0) circle (0.05);
\draw[color=black, fill=black, thick](5,0) circle (0.05);
\draw[color=black, fill=black, thick](6.5,0) circle (0.05);
\draw[color=black, fill=black, thick](7,0) circle (0.05);
\draw[color=black, fill=black, thick](9,0) circle (0.05);
\draw[color=black, fill=black, thick](10,0) circle (0.05);
\end{tikzpicture}
\end{center}
\caption{path}\label{fig:straight}
\end{figure}



\subsection{A fork-free set of subtrees}\label{sec:fork}

Given a tree $T$, 
a pair of subtrees $T_1$ and $T_2$ has
a \emph{fork} if there is a path $P_1$ with end-vertices in $T_1$ but not $T_2$\,, and
a path $P_2$ with end-vertices in $T_2$ but not $T_1$\,, such that 
$P_1$ and $P_2$ have a vertex in common.
We consider the problem of finding a minimum-weight covering of a tree by a given set of subtrees; see \cite[called problem ``$C_0$'']{BaranyEdmondsWolsey1986}. This problem  admits a polynomial-time algorithm when the family of subtrees is fork free, using some problem transformations and then a recursive algorithm. 
This problem and algorithm is relevant to our situation when: 
$G$ is a tree,  each ball intersects $G$ on a subtree (easily checked), and the set of these subtrees is fork free (easily checked). 
A simple  special case is the situation considered in 
\S\ref{sec:unit}. 

Fig. \ref{fig:subtrees} illustrates a fork-free situation arising in our setting.
The four disks cover the subtrees (indicated by their edge-sets): $\{1,2,3\}$, $\{1,2,3,4\}$, $\{3,4,5,6\}$, and $\{5,6,7,8,9,10,11\}$. It is easy (but a bit tedious) to check that this set of subtrees is fork free.


\begin{figure}[ht]
\begin{center}
\begin{tikzpicture}
\bf\color{black}\tiny
\draw[red, thick] (0,0) -- (0,1.41);
\draw[red, thick] (0,0) -- (-1,-1);
\draw[red, thick] (0,0) -- (1,-1);

\draw[red, thick] (1,-1) -- (2.0,0);
\draw[red, thick] (1,-1) -- (2,-2);
\draw[red, thick] (2,0) -- (3.5,0.3);
\draw[red, thick] (2,0) -- (4.3,-0.6);
\draw[red, thick] (2,-2) -- (4.4,-1.3);
\draw[red, thick] (2,-2) -- (2.6,-2.9);
\draw[red, thick] (2.6,-2.9) -- (4,-2.3);

\node[circle, text=red, minimum size=0.1cm]    (e1)  at ( -0.1, 0.5 ) {1};
\node[circle, text=red, minimum size=0.1cm]    (e1)  at ( -0.5,-0.35 ) {2};
\node[circle, text=red, minimum size=0.1cm]    (e1)  at ( 0.19,-0.05 ) {3};
\node[circle, text=red, minimum size=0.1cm]    (e1)  at ( 0.68,-0.55 ) {4};
\node[circle, text=red, minimum size=0.1cm]    (e1)  at ( 1.55,-0.3 ) {5};
\node[circle, text=red, minimum size=0.1cm]    (e1)  at ( 1.55,-1.4 ) {6};
\node[circle, text=red, minimum size=0.1cm]    (e1)  at ( 2.6,0.25 ) {7};
\node[circle, text=red, minimum size=0.1cm]    (e1)  at ( 3.2,-0.18 ) {8};
\node[circle, text=red, minimum size=0.1cm]    (e1)  at ( 3.2,-1.53 ) {9};
\node[circle, text=red, minimum size=0.1cm]    (e1)  at ( 2.4,-2.3 ) {10};
\node[circle, text=red, minimum size=0.1cm]    (e1)  at ( 3.2,-2.47 ) {11};

\draw[color=green!60!blue, thick](-1,0.5) circle (1.5);
\draw[color=green!60!blue, thick](0,0) circle (1.41);
\draw[color=green!60!blue, thick](1,-1) circle (1.41);
\draw[color=green!60!blue, thick](2.7,-1.2) circle (1.7);

\draw[color=black, fill=black, thick](0.28,-0.28) circle (0.05);
\draw[color=black, fill=black, thick](0,0) circle (0.05);
\draw[color=black, fill=black, thick](0,1.41) circle (0.05);
\draw[color=black, fill=black, thick](-1,-1) circle (0.05);
\draw[color=black, fill=black, thick](1,-1) circle (0.05);
\draw[color=black, fill=black, thick](2.0,0)circle (0.05);
\draw[color=black, fill=black, thick](2,-2)circle (0.05);
\draw[color=black, fill=black, thick](3.5,0.3)circle (0.05);
\draw[color=black, fill=black, thick](4.3,-0.6)circle (0.05);
\draw[color=black, fill=black, thick](4.4,-1.3)circle (0.05);
\draw[color=black, fill=black, thick](2.6,-2.9)circle (0.05);
\draw[color=black, fill=black, thick](4,-2.3)circle (0.05);
\end{tikzpicture}
\end{center}
\caption{subtrees}\label{fig:subtrees}
\end{figure}


Now, it 
is  easy to make a  simple example arising from our situation (which is even a unit-grid graph), where a fork arises; see Fig. \ref{fig:forking}
(The tree is the red graph, and the pair of green covering disks define the two subtrees). 
\begin{figure}[!ht]
\centering
\includegraphics[height=0.9in]{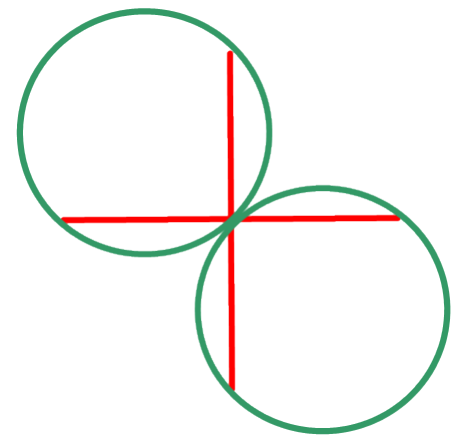}
\caption{A forking configuration}\label{fig:forking}
\end{figure}
So the algorithm from 
\cite{BaranyEdmondsWolsey1986} does not apply to this example.  On the other hand, the constraint matrix of this instance is balanced, so this is an easy instance (solvable by linear programming).  

Referring back to Fig. \ref{fig:5star},
we also find forks. 
Also see \cite[Sec. 5]{BaranyEdmondsWolsey1986} which raises the interesting question on the relationship between \emph{totally}-balanced matrices\footnote{a 0/1 matrix is \emph{totally balanced} if it has no square submatrix \emph{(of any order)}
 with two ones per row and per column, thus a subclass of balanced 0/1 matrices; see \cite{totallybalanced}.} and covering matrices of fork-free families. These notions are incomparable; 
\cite{BaranyEdmondsWolsey1986}  has a very simple example that is fork-free but not \emph{totally} balanced. But we can even get fork-free coming from our context and 
not balanced, returning again to 
our example of Fig. \ref{fig:C38} (it is not a tree, but we can break an edge).

\section{Strong fixing for \ref{scp}}\label{sec:strongfix}

A well-known result for mixed-integer nonlinear-minimization formulations is that if we have a good upper bound on its optimal objective value and a feasible dual solution to a convex relaxation of the problem, where the dual variable associated with a primal inequality constraint satisfies an easily-checked condition, we can immediately infer that the constraint is satisfied as an equation in every discrete optimal solution; in other words, we can fix the slack variable associated with the inequality to zero. In this section, we discuss this result and present a \emph{strong-fixing} procedure, in which we search for feasible dual solutions that lead to the fixing of the greatest number of variables. This result brings a fundamental improvement to the branch-and-bound approach for discrete optimization problems based on convex relaxations.

We  consider the linear relaxation of \ref{scp}, that is  
$
\min\{w^\top z ~:~ Az\geq \mathbf{e},~ 
z\geq 0\},
$
and the associated dual problem
\begin{equation}\label{d}\tag{D}
\max\{u^\top \mathbf{e} ~:~ u^\top A\leq  w^\top,~ 
u\geq 0\}.
\end{equation}
An optimal solution of \ref{d} is commonly  used in the application of \emph{reduced-cost fixing}, see, e.g. \cite{reducedcostfixing}, a classical technique in integer programming that uses upper bounds on the optimal solution values of minimization problems for inferring variables whose values can be fixed while preserving the optimal solutions. The well-known technique is based on Thm. \ref{thm:fix_dopt}  (see proof in \ref{app:fixSCP0}).

\begin{theorem}\label{thm:fix_dopt}
Let UB be the objective-function value of a feasible solution for  {\rm\ref{scp}}, and let $\hat u$ be a feasible solution for  {\rm\ref{d}}.
Then, for every optimal solution $z^\star$ for {\rm\ref{scp}}, we have:
\vspace{-5pt}
\begin{align}
     &z_j^\star \leq  \textstyle \left\lfloor
  \frac{UB-\hat{u}^\top\mathbf{e}}{w_j - \hat{u}^\top A_{\cdot j}}\right\rfloor ,\quad ~\forall\; j \in \{1,\ldots,n\} \text{ such that } w_j - \hat{u}^\top A_{\cdot j}>0.\label{ineq-fixing}
     \end{align}
 \end{theorem}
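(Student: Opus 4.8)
The plan is to prove the theorem by a direct weak-duality estimate, exactly in the spirit of classical reduced-cost fixing, but keeping track of a single index $j$ rather than summing everything out. Fix any optimal solution $z^\star$ of \ref{scp} and any $j\in\{1,\dots,n\}$ with $w_j-\hat u^\top A_{\cdot j}>0$. The first two observations are immediate: since $z^\star\in\{0,1\}^n$ satisfies $Az^\star\geq\mathbf{e}$, it is in particular feasible for the linear relaxation of \ref{scp}; and since $UB$ is the value of a feasible solution of the minimization problem \ref{scp}, we have $w^\top z^\star\leq UB$. (As a sanity check, weak LP duality applied to the feasible pair $z^\star,\hat u$ also gives $w^\top z^\star\geq \hat u^\top\mathbf{e}$, so $UB-\hat u^\top\mathbf{e}\geq 0$ and the bound to be proved is meaningful.)

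Next I would assemble the key chain of inequalities. Using $\hat u\geq 0$ together with $Az^\star\geq\mathbf{e}$ gives $\hat u^\top\mathbf{e}\leq \hat u^\top A z^\star$, and hence
\[
w^\top z^\star-\hat u^\top\mathbf{e}\;\geq\;w^\top z^\star-\hat u^\top A z^\star\;=\;\sum_{k=1}^{n}\bigl(w_k-\hat u^\top A_{\cdot k}\bigr)z_k^\star\;\geq\;\bigl(w_j-\hat u^\top A_{\cdot j}\bigr)z_j^\star,
\]
where the last inequality drops all but the $j$-th term, which is legitimate because every coefficient $w_k-\hat u^\top A_{\cdot k}$ is nonnegative by dual feasibility of $\hat u$ and every $z_k^\star\geq 0$. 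Combining this with $w^\top z^\star\leq UB$ yields
\[
\bigl(w_j-\hat u^\top A_{\cdot j}\bigr)z_j^\star\;\leq\;UB-\hat u^\top\mathbf{e}.
\]
Since $w_j-\hat u^\top A_{\cdot j}>0$ by hypothesis, dividing through gives $z_j^\star\leq (UB-\hat u^\top\mathbf{e})/(w_j-\hat u^\top A_{\cdot j})$, and because $z_j^\star$ is a nonnegative integer the right-hand side may be replaced by its floor, which is precisely the claimed bound.

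I do not expect a genuine obstacle here; the argument is elementary, and the whole content is bookkeeping. The two points that require a little care are (i) getting the directions of the two inequalities right — one comes from primal feasibility of $z^\star$ paired with $\hat u\geq 0$, the other from dual feasibility of $\hat u$ paired with $z^\star\geq 0$ — and (ii) noting that the rounding-down step is valid only because $z^\star$ is integral, and that the strict positivity $w_j-\hat u^\top A_{\cdot j}>0$ is exactly what makes the division (and therefore a finite, useful bound) possible. One should also remark that optimality of $z^\star$ is used only through $w^\top z^\star\leq UB$, so the statement holds verbatim for any feasible $z^\star$ whose value is at most $UB$; this is what guarantees that fixing $z_j=0$ when the floor is $0$ preserves all optimal solutions.
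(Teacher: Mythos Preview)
Your proof is correct. It is the classical direct reduced-cost--fixing argument: bound $w^\top z^\star$ above by $UB$, bound $\hat u^\top\mathbf{e}$ above by $\hat u^\top Az^\star$, subtract, observe that every term $(w_k-\hat u^\top A_{\cdot k})z_k^\star$ is nonnegative by dual feasibility, drop all but the $j$-th, divide, and floor. Each step is valid as you stated it.

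The paper proves the same result by a different, more indirect route: it adds to \ref{scp} the constraint $z_{\hat\jmath}\geq \lfloor (UB-\hat u^\top\mathbf{e})/(w_{\hat\jmath}-\hat u^\top A_{\cdot\hat\jmath})\rfloor+1$, writes down the dual of the relaxation of this modified problem (which has one extra scalar multiplier $\sigma$), exhibits the feasible dual point $(\hat u,\hat\sigma)$ with $\hat\sigma=w_{\hat\jmath}-\hat u^\top A_{\cdot\hat\jmath}$, and checks that its objective exceeds $UB$, so the added constraint cannot hold at an optimum. Your argument is shorter and more transparent for this particular theorem; the paper's ``modify the problem and look at the new dual'' template is chosen because it is reused verbatim for the companion results (fixing at $1$ in Thm.~\ref{thm:fix_doptat1}, and the SOCP analogues Thms.~\ref{thm:fix_dopt2} and~\ref{thm:fix_dopt2zj}), where the extra dual variable attached to a redundant constraint is exactly the quantity one wants to isolate.
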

 
For a given $j\in \{1,\ldots,n\}$, we should have that the right-hand side in \eqref{ineq-fixing} equal to 0 to be able to fix the variable $z_j$ at 0 in \ref{scp}. Equivalently, we should have
$
 w_j+ \hat{u}^\top(\mathbf{e} -  A_{\cdot j})> UB\,.
$

We observe that any feasible solution $\hat{u}$ can be used in \eqref{ineq-fixing}. Then, for all $j\in \{1,\ldots,n\}$, we propose the solution of 
\begin{align}
    \mathfrak{z}_{j}^{\mbox{\tiny SCP(0)}}:=w_j+\max\{u^\top (\mathbf{e} -  A_{\cdot j})~:~ u^\top A\leq  w^\top,~ 
u\geq 0\}. \label{fj}\tag{F$_j^{\mbox{\tiny SCP(0)}}$}
\end{align}
Note that, for each $j\in \{1,\ldots,n\}$, if there is a feasible solution $\hat{u}$ to \ref{d} that can be used in \eqref{ineq-fixing} to fix $z_j$ at 0, then  the optimal solution of \ref{fj} has objective value $\mathfrak{z}_{j}^{\mbox{\tiny SCP(0)}^*}$ greater than $UB$ and can be used as well. 

Now, we note that adding the redundant constraint $z\leq \mathbf{e}$ to the linear relaxation of \ref{scp} would lead to the modified dual problem  
\begin{equation}\label{dplus}\tag{D$^+$}
\max\{u^\top \mathbf{e} - v^\top \mathbf{e}~:~ u^\top A - v^\top \leq  w^\top,~ 
u,v\geq 0\},
\end{equation}
where $v\in\mathbb{R}^n$ is the dual variable corresponding to the redundant constraint. We notice that the vectors $\mathbf{e}$ in the objective are both vectors of ones, but with different dimensions according to the dimensions of $u$ and $v$. Analogously to Thm. \ref{thm:fix_dopt}, we can establish the following result (see proof in \ref{app:fixSCP}).

\begin{theorem}\label{thm:fix_doptat1}
Let UB be the objective-function value of a feasible solution for  {\rm\ref{scp}}, and let $(\hat u,\hat v)$ be a feasible solution for  {\rm\ref{dplus}}.
Then, for every optimal solution $z^\star$ for {\rm\ref{scp}}, we have:
\vspace{-5pt}
\begin{align}
z_j^\star = 1,\quad ~\forall\;  j\in \{1,\ldots,n\} \mbox{ such that }   \hat{u}^\top\mathbf{e} - \hat{v}^\top\mathbf{e} + \hat{v}_j >  UB. \label{fix1dplus}
     \end{align}
 \end{theorem}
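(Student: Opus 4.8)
The plan is to mimic the structure of the proof of Thm.~\ref{thm:fix_dopt}, but now working with the augmented relaxation that includes the redundant bounds $z\le\mathbf{e}$, and then contrapositively forcing $z_j^\star=1$. First I would fix an optimal $z^\star$ for \ref{scp} and a feasible dual solution $(\hat u,\hat v)$ for \ref{dplus}. The key observation is weak duality for the bounded linear relaxation $\min\{w^\top z: Az\ge\mathbf{e},\ 0\le z\le\mathbf{e}\}$: since $z^\star$ is feasible for this relaxation and $(\hat u,\hat v)$ is dual feasible, we get $w^\top z^\star \ge \hat u^\top\mathbf{e} - \hat v^\top\mathbf{e}$. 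But I need something sharper that isolates the coordinate $j$, so instead of plain weak duality I would write out the complementary-slackness-style inequality directly: from $\hat u^\top A - \hat v^\top \le w^\top$ and $z^\star\ge 0$ we have $(\hat u^\top A - \hat v^\top)z^\star \le w^\top z^\star$, and from $Az^\star\ge\mathbf{e}$, $\hat u\ge 0$ we have $\hat u^\top A z^\star \ge \hat u^\top\mathbf{e}$; combining, $w^\top z^\star \ge \hat u^\top\mathbf{e} - \hat v^\top z^\star$.

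Next I would suppose, for contradiction, that $z_j^\star = 0$ for some index $j$ with $\hat u^\top\mathbf{e} - \hat v^\top\mathbf{e} + \hat v_j > UB$. Then in the term $\hat v^\top z^\star = \sum_i \hat v_i z_i^\star$ the $j$-th summand vanishes, and since $\hat v\ge 0$ and every $z_i^\star\in\{0,1\}$, we get $\hat v^\top z^\star \le \hat v^\top\mathbf{e} - \hat v_j$. Plugging this into the inequality from the previous step gives
\[
w^\top z^\star \;\ge\; \hat u^\top\mathbf{e} - \hat v^\top z^\star \;\ge\; \hat u^\top\mathbf{e} - \hat v^\top\mathbf{e} + \hat v_j \;>\; UB.
\]
On the other hand, since $z^\star$ is an optimal solution for \ref{scp} and UB is the value of some feasible solution, we have $w^\top z^\star \le UB$, a contradiction. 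Hence $z_j^\star = 1$, as claimed (using $z_j^\star\in\{0,1\}$).

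The step I expect to require the most care is getting the chain of inequalities oriented correctly and making sure the bound $\hat v^\top z^\star \le \hat v^\top\mathbf{e} - \hat v_j$ genuinely uses integrality of $z^\star$ at the right place — the inequality $z_i^\star\le 1$ is what lets us replace the remaining $z_i^\star$'s by $1$, while $z_j^\star = 0$ is what removes the $j$-th term; both facts are needed, and conflating them would weaken the bound. Everything else is routine: weak duality for the bounded relaxation and the definition of UB. It may also be worth remarking, as the paragraph before the theorem does, that analogously to \eqref{fj} one can, for each $j$, maximize $\hat u^\top\mathbf{e} - \hat v^\top\mathbf{e} + \hat v_j$ over the feasible region of \ref{dplus} to search for the most favorable certificate, but that optimization reformulation is a corollary of the theorem rather than part of its proof.
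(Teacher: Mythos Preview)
Your proof is correct. The chain
\[
w^\top z^\star \ \ge\ \hat u^\top A z^\star - \hat v^\top z^\star \ \ge\ \hat u^\top\mathbf{e} - \hat v^\top z^\star \ \ge\ \hat u^\top\mathbf{e} - (\hat v^\top\mathbf{e}-\hat v_j) \ >\ UB
\]
is valid under the contrapositive hypothesis $z_j^\star=0$, and the contradiction with $w^\top z^\star\le UB$ is clean.

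The paper, however, argues differently. Rather than manipulating weak-duality inequalities directly, it adds the constraint $z_{\hat\jmath}=0$ to \ref{scp}, writes down the dual of the continuous relaxation of this \emph{modified} problem (which differs from \ref{dplus} by one extra free multiplier $\sigma$ in the $\hat\jmath$-th dual constraint), and then exhibits an explicit feasible solution for that modified dual built from $(\hat u,\hat v)$ by shifting $\hat v_{\hat\jmath}$ and choosing $\sigma=-\hat v_{\hat\jmath}$. The objective value of this constructed dual solution is exactly $\hat u^\top\mathbf{e}-\hat v^\top\mathbf{e}+\hat v_{\hat\jmath}$, which by weak duality lower-bounds the optimum of the modified \ref{scp}; if this exceeds $UB$, the added constraint is incompatible with optimality. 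Your route is more elementary and self-contained; the paper's ``add the constraint and look at the new dual'' template is more structural and is what they reuse verbatim for the SOCP setting (Thms.~\ref{thm:fix_dopt2} and \ref{thm:fix_dopt2zj}), where a direct inequality chain would be messier.
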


 The redundant  constraint $z\leq\mathbf{e}$ can be useful if we search for the \emph{best feasible solution} to \ref{dplus} that can be used in \eqref{fix1dplus} to fix $z_j$ at 1. With this purpose,  we propose the solution of
\begin{align}
    \mathfrak{z}_{j}^{\mbox{\tiny SCP(1)}}:=\max\{u^\top\mathbf{e}-v^\top\mathbf{e} + v_j~:~ u^\top A- v^\top \leq  w^\top,~ 
u,v\geq 0\}, \label{fj1}\tag{F$_j^{\mbox{\tiny SCP(1)}}$}
\end{align}
for all $j\in\{1,\ldots,n\}$.
If the value of the optimal solution  of \ref{fj1} is greater than $UB$, we can fix $z_j$ at 1.

We call \emph{strong fixing}, the procedure that fixes all possible variables in \ref{scp} at 0 and 1,
in the context of Thm. \ref{thm:fix_dopt} and Thm. \ref{thm:fix_doptat1},
 by using  a given upper bound $UB$ on the optimal solution value of \ref{scp}, and solving  all problems \ref{fj} and \ref{fj1},  for $j\in\{1,\ldots,n\}$.



\section{Computational experiments for \ref{scp}}\label{sec:comp}

 We have  implemented a framework 
 to generate random instances of \ref{CP1} and formulate them as the set-covering problem \ref{scp}.
 We solve the instances applying the following procedures in the given order.
\begin{itemize}
    \item[{\hypertarget{(a)}{(a)}}] Reduce the number of constraints in \ref{scp} by eliminating dominated rows of the associated constraint matrix $A$.
    \item[{\hypertarget{(b)}{(b)}}]  Fix variables in the reduced \ref{scp}, applying \emph{reduced-cost fixing}, i.e., using  Thm. \ref{thm:fix_dopt}, taking $\hat{u}$ as an optimal solution of  \ref{d}. If it was possible to fix variables, reapply \hyperlink{(a)}{(a)} to reduce the number of constraints further.
    \item[{\hypertarget{(c)}{(c)}}] Apply  \emph{strong fixing} (see \S\ref{sec:strongfix}). In case it was possible to fix variables, reapply procedure \hyperlink{(a)}{(a)} to reduce the number of constraints in the remaining problem. 
    \item[{\hypertarget{(d)}{(d)}}] \label{procd} Solve the last problem obtained with \texttt{Gurobi}.
\end{itemize}

Our implementation is in \texttt{Python}, using  \texttt{Gurobi}  v. 10.0.2. We ran  \texttt{Gurobi} with one thread per core, default parameter settings (with the presolve option on).
We ran  on
an 8-core machine (under Ubuntu): Intel i9-9900K
CPU  running at 3.60GHz, with  32
GB of memory. 

\subsection{Generating test instances}\label{subsec:gen}

In Alg. \ref{alg:gen}, we show how we construct random instances for \ref{CP1}, for a given number of points $n$,   a given interval $[R_{\min},R_{\max}]$ in which  the covering radii for the points must be, and a given number $\nu$ of nodes in the graph $G=(V,E)$ to be covered.  We construct the graph $G$ with node-set $V$ given by  
$\nu$ points randomly generated in the unit square in the plane, and the edges in $E$ initially given by the edges of the minimum spanning tree (MST) of $V$, which guarantees connectivity of $G$. Finally, we compute the Delaunay triangulation of the $\nu$ points in $V$, and add to $E$  the edges from the triangulation that do not belong to its convex hull.  All other details of the instance generation can be seen in Alg. \ref{alg:gen}. In Fig. \ref{fig:graphG}, we show an example of the MST of $V$ and of the graph $G$.

\begin{figure}[ht]
\centering
\includegraphics[width=0.5\textwidth]{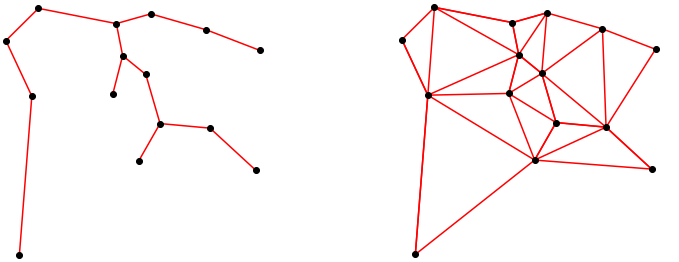}
\caption{Constructing an instance of \ref{CP1}:  MST of $V$ (left) and graph $G$ (right)}\label{fig:graphG}
\end{figure}

\begin{algorithm}[!ht]
	\footnotesize{
		\KwIn{  $n$, $\nu$,  $R_{\min}$, $R_{\max}$ ($0.1 < R_{\min} < R_{\max} < 0.2$).}
  \KwOut{instance of \ref{CP1}/\ref{scp}.}
 randomly generate a set $V$ of $\nu$
 points in $ Q:=\{ [0,1] \times[0,1]\}$\;
 use the Python package \texttt{networkx} to compute the edge set $E_{\mbox{\tiny{ST}}}$ of an MST of $V$, letting the weight for edge $(i,j)$  as the Euclidean distance between $i$ and $j$\;
 compute the Delaunay triangulation of $V$, and denote the subset of its edges that are not in the convex hull by $E_{\mbox{\tiny{DT}}}$\,\;
 let $G=(V,E)$, where $E:=E_{\mbox{\tiny{ST}}}\cup E_{\mbox{\tiny{DT}}}$\,\;
 randomly generate a set $N$ of $n$  points $x^k$ in $Q$, $k=1,\ldots,n$\label{step5}\;
 randomly generate $r_k$  in $[R_{\min},R_{\max}]$, $k=1,\ldots,n$\;
  randomly generate $w_k$ in $[0.5r_k^2\,,1.5r_k^2]$, $k=1,\ldots,n$\; 
 let $c_k$ be the circle centered at $x^k$ with radius $r_k$\,, for $k=1,\ldots,n$\;
 for each $e \in E$, compute the intersections (0,1, or 2) of $c_k$ and $e$\;
 compute all the subintervals defined on each edge by the intersections, and let $m$ be the total number of subintervals for all edges\; 
 \caption{Instance generator \label{alg:gen} }
}
\end{algorithm}

We note that the instance constructed by Alg. \ref{alg:gen} may be infeasible, if any part of an edge of $G$ is not covered by any point. In this case, we iteratively increase all radii $r_k$\,, for $k=1,\ldots,n$, by 10\%, until the instance is feasible. 
For feasible instances, we check if there are circles that do not intercept any edges of the graph. If so, we  iteratively increase the radius of each of those circles by 10\%, until they intercept an edge. By this last procedure, we avoid zero columns in the constraint matrices $A$ associated to our instances of \ref{scp}.
In Fig. \ref{fig:sol}, we represent  the data for an instance of  \ref{CP1} and its optimal solution. In the optimal solution we see  9 points/circles selected.

\begin{figure}[ht]
%
\includegraphics[width=0.45\textwidth]
{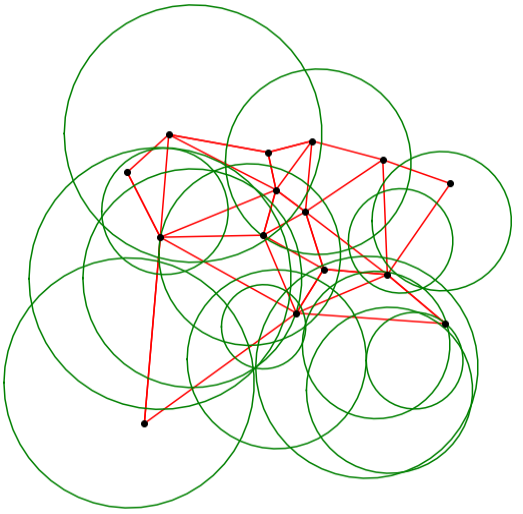}~
\includegraphics[width=0.45\textwidth]{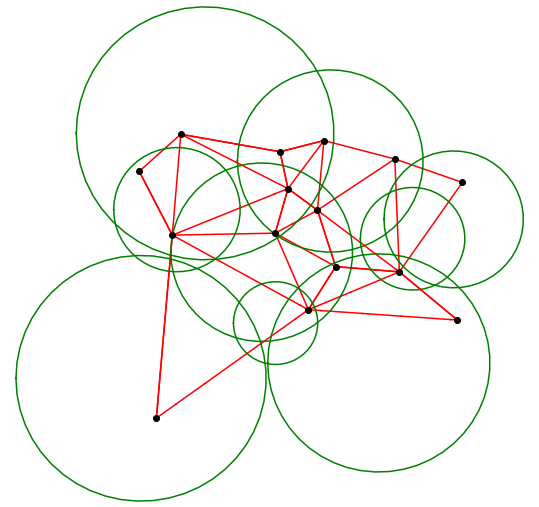}
\caption{Instance data (left) and its optimal solution (right)}\label{fig:sol}
\end{figure}

\subsection{Strong-fixing experiments}

In Table \ref{tab:my-table} we present detailed numerical results for five instances of each problem size considered. We aim at observing the impact of  strong fixing in reducing the size of  \ref{scp}, after having already applied standard reduced-cost fixing and eliminating redundant constraints.  
Although we discussed in \S\ref{sec:strongfix} the strong fixing procedure for \ref{scp}, to fix variables to 0 and 1, we ended up applying only the former in our numerical experiments. Initially, we tried the latter as well, but it did not compensate, as only a small number of variables could be fixed to 1. 
In Table \ref{tab:my-table}, we display the number of rows ($m$) and columns ($n$) of the constraint matrix $A$ after applying each procedure  described in \hyperlink{(a)}{(a}-\hyperlink{(d)}{d)} and their elapsed time (seconds). For all instances we set $\nu=0.03n$; having $\nu$ proportional to $n$ gave us
similar geometry for instances of different sizes, and the constant 0.03
gave us reasonable looking and challenging instances (note that these parameters do not correspond to the small illustrative example in Fig. \ref{fig:sol}). 
In the following, we specify the statistics presented in Table \ref{tab:my-table}. We have under 
\begin{itemize}
    \item \texttt{Gurobi}: the size of the original  $A$ and the time to solve  with \texttt{Gurobi}.
    \item \texttt{Gurobi} presolve: the size of the  matrix $A$ after \texttt{Gurobi}'s presolve is applied and the time to apply it. 
    \item matrix reduction: the size of the matrix $A$ after our procedure to eliminate dominated rows  of the original matrix $A$ is applied and the time to apply it. 
    \item reduced-cost fixing: the size of the matrix $A$ after we apply the result in Thm. \ref{thm:fix_dopt}, taking $\hat u$ as the optimal solution of \ref{d}, and the time to apply it. 
    \item strong fixing: the size of the matrix $A$ after we apply our strong-fixing procedure and the optimal solution of \ref{d}, and the time to apply it. 
    \item \texttt{Gurobi} reduced time: the time to solve the instance of the problem obtained after we apply the strong-fixing procedure. 
\end{itemize}

We see that our own presolve, corresponding to procedures \hyperlink{(a)}{(a}-\hyperlink{(c)}{c)}, is  effective in reducing the size of the problem, and does not lead in general to problems bigger than the ones obtained with \texttt{Gurobi}'s presolve (even though from the increase in the number of variables when compared to the original problem, we see that \texttt{Gurobi}'s presolve implements different procedures, such as sparsification on the equation system after adding slack variables).  We also see that \emph{strong fixing} is very effective in reducing the size of the problem. Compared to the problems to which it is applied, we have an average decrease of $47\%$ in $m$ and $42\%$ in $n$. Of course, our presolve is very time consuming compared to \texttt{Gurobi}'s. Nevertheless, for all instances where \emph{strong fixing} can further reduce a significant number of variables, there is an improvement in the final time to solve the problem, and we note that  all the steps of reduction and fixing can still be further improved. This shows that \emph{strong fixing} is a promising tool to be adopted  in the solution of difficult problems, as is the case of the well-known \emph{strong-branching} procedure.

\begin{table}[ht]
\resizebox{\textwidth}{!}{%
\begin{tabular}{r|rrr|rrr|rrr|rrr|rrr|r}
\multicolumn{1}{c|}{}&\multicolumn{3}{c|}{\texttt{Gurobi}}&\multicolumn{3}{c|}{\texttt{Gurobi}}&\multicolumn{3}{c|}{matrix}&\multicolumn{3}{c|}{reduced-cost}&\multicolumn{3}{c|}{strong}&\multicolumn{1}{c}{\texttt{Gurobi}}\\[-2pt] 
\multicolumn{1}{c|}{}&\multicolumn{3}{c|}{ }&\multicolumn{3}{c|}{presolve}&\multicolumn{3}{c|}{reduction}&\multicolumn{3}{c|}{fixing}&\multicolumn{3}{c|}{fixing}&\multicolumn{1}{c}{reduced}\\[-2pt]
\multicolumn{1}{c|}{\#}&\multicolumn{1}{c}{$m$}&\multicolumn{1}{c}{$n$}&\multicolumn{1}{r|}{time}&\multicolumn{1}{c}{$m$}&\multicolumn{1}{c}{$n$}&\multicolumn{1}{r|}{time}&\multicolumn{1}{c}{$m$}&\multicolumn{1}{c}{$n$}&\multicolumn{1}{r|}{time}&\multicolumn{1}{c}{$m$}&\multicolumn{1}{c}{$n$}&\multicolumn{1}{r|}{time}&\multicolumn{1}{c}{$m$}&\multicolumn{1}{c}{$n$}&\multicolumn{1}{r|}{time}&\multicolumn{1}{c}{time}\\[2pt]
\hline
&&&&&&&&&&&&&&&\\[-6pt]
1  & 7164  & 500  & 0.28      & 372  & 257  & 0.17  & 531  & 500  & 24.49   & 214  & 167  & 0.50   & 12   & 13   & 5.67  & 0.00  \\
2  & 4078  & 500  & 0.37      & 427  & 327  & 0.10  & 574  & 500  & 12.08   & 344  & 267  & 0.94   & 49   & 52   & 19.18  & 0.00 \\
3  & 3865  & 500  & 0.34      & 399  & 331  & 0.10  & 537  & 500  & 10.22   & 342  & 291  & 0.83   & 46   & 58   & 21.39  & 0.00 \\
4  & 4399  & 500  & 0.27      & 412  & 300  & 0.10  & 559  & 500  & 13.15   & 310  & 236  & 0.79   & 6    & 10   & 11.49  & 0.00 \\
5  & 4098  & 500  & 0.22      & 446  & 359  & 0.13  & 546  & 500  & 11.92   & 146  & 132  & 0.35   & 0   & 0   & 2.57   & 0.00 \\
6  & 13254 & 1000 & 2.90      & 1279 & 1119 & 0.73  & 1728 & 1000 & 136.00  & 1331 & 705  & 15.92  & 244  & 202  & 133.39  & 0.03 \\
7  & 9709  & 1000 & 3.91      & 1263 & 1125 & 0.77  & 1581 & 1000 & 81.86   & 1443 & 837  & 13.33  & 909  & 562  & 484.77 & 2.29 \\
8  & 10847 & 1000 & 2.53      & 1307 & 1140 & 0.80  & 1672 & 1000 & 94.13   & 1351 & 712  & 15.22  & 263  & 196  & 131.05  & 0.06 \\
9  & 9720  & 1000 & 2.81      & 1335 & 1149 & 0.69  & 1700 & 1000 & 84.27   & 1247 & 667  & 12.14  & 209  & 192  & 120.86  & 0.00 \\
10 & 9377  & 1000 & 3.20      & 1300 & 1193 & 0.67  & 1621 & 1000 & 75.76   & 1329 & 764  & 12.17  & 322  & 246  & 161.34 & 0.08 \\
11 & 17970 & 1500 & 29.46     & 2720 & 2222 & 2.75  & 3175 & 1500 & 278.33  & 2862 & 1304 & 62.73  & 1061 & 609  & 507.11 & 2.31 \\
12 & 22022 & 1500 & 65.65     & 2838 & 2208 & 3.01  & 3316 & 1500 & 425.03  & 3115 & 1336 & 68.69  & 1725 & 844  & 651.32 & 9.24 \\
13 & 18716 & 1500 & 42.05     & 2944 & 2219 & 2.86  & 3466 & 1500 & 315.96  & 3311 & 1379 & 82.08  & 2184 & 983  & 646.89 & 24.44 \\
14 & 18464 & 1500 & 14.30     & 2545 & 1988 & 2.44  & 3114 & 1500 & 306.49  & 2856 & 1270 & 59.32  & 1548 & 762  & 457.58 & 6.17 \\
15 & 19053 & 1500 & 33.95     & 2908 & 2207 & 2.82  & 3376 & 1500 & 327.13  & 3162 & 1361 & 72.56  & 1745 & 851  & 614.52 & 7.57 \\
16 & 29227 & 2000 & 170.78    & 4194 & 3037 & 6.59  & 4988 & 2000 & 832.81  & 4691 & 1793 & 177.51 & 2959 & 1221 & 1677.15 & 83.46\\
17 & 29206 & 2000 & 212.07    & 4365 & 3186 & 7.04  & 4993 & 2000 & 823.83  & 4793 & 1828 & 188.71 & 3182 & 1318 & 1789.23 & 102.57\\
18 & 28927 & 2000 & 762.26    & 4120 & 2971 & 7.22  & 5016 & 2000 & 799.22  & 4889 & 1862 & 192.64 & 4060 & 1557 & 1961.38 & 497.09\\
19 & 30746 & 2000 & 6970.90   & 4943 & 3645 & 7.22  & 5243 & 2000 & 842.61  & 5243 & 1998 & 229.23 & 5109 & 1944 & 3602.62 & 7421.27\\
20 & 31606 & 2000 & 1914.49   & 5158 & 3711 & 8.20  & 5378 & 2000 & 885.00  & 5374 & 1997 & 241.15 & 5088 & 1897 & 3665.37 & 1034.66\\
21 & 42137 & 2500 & 30111.51 & 6928 & 4656 & 14.99 & 7606 & 2500 & 1785.99 & 7580 & 2470 & 517.05 & 7377 & 2382 & 8332.57& 26554.56 \\
22 & 41229 & 2500 & 594.63    & 6511 & 4454 & 15.07 & 7454 & 2500 & 1689.45 & 7127 & 2344 & 432.95 & 3979 & 1450 & 4672.70 & 142.18\\
23 & 43100 & 2500 & 614.62    & 6236 & 4209 & 15.16 & 7381 & 2500 & 1856.51 & 7139 & 2313 & 462.52 & 5477 & 1828 & 4528.96 & 494.37\\
24 & 42809 & 2500 & 27394.59  & 7210 & 4836 & 18.89 & 7851 & 2500 & 964.75 & 7810 & 2456 & 179.03 & 7557 & 2362 & 8474.90 & 21279.99\\
25 & 42641 & 2500 & 3594.00   & 6639 & 4723 & 16.43 & 7038 & 2500 & 1615.20 & 7032 & 2497 & 412.83 & 6604 & 2360 & 7047.04 & 6540.81
\end{tabular}%
}
\caption{\ref{scp} reduction experiment (5 instances of each size)}
\label{tab:my-table}
\end{table}
In Table \ref{tab:50inst}, we show the shifted geometric mean with shift parameter 1 (commonly used for aggregated comparisons in integer programming; see \cite{Berthold_Hendel_2021}, for example) for the same statistics presented in Table \ref{tab:my-table}, for 50 instances of each $n=500,1000,1500,2000$  and for 39 instances of $n=2500$ (we generated 50 instances with $n=2500$, but we only consider the 39 instances that could be solved within our time limit of 10 hours). 
The average time-reduction factor for all instances solved for $n=500, 1000, 1500, 2000,2500$ is respectively, $0.01$, $0.13$, $0.55$, $0.86$, and $0.76$.
The time-reduction factor is the ratio of the elapsed time used by \texttt{Gurobi} to solve the problem after we applied the matrix reduction and the strong fixing to the elapsed time used to solve the original problem.  When the factor is less than one,  our presolve led to 
 a problem that could be solved faster than the original problem.
\begin{table}[ht]
\resizebox{\textwidth}{!}{%
\setlength\tabcolsep{4pt}
\begin{tabular}{rrr|rrr|rrr|rrr|rrr|r}
\multicolumn{3}{c|}{\texttt{Gurobi}}&\multicolumn{3}{c|}{\texttt{Gurobi}}&\multicolumn{3}{c|}{matrix}&\multicolumn{3}{c|}{reduced-cost}&\multicolumn{3}{c|}{strong}&\multicolumn{1}{c}{\texttt{Gurobi}}\\[-2pt] 
\multicolumn{3}{c|}{ }&\multicolumn{3}{c|}{presolve}&\multicolumn{3}{c|}{reduction}&\multicolumn{3}{c|}{fixing}&\multicolumn{3}{c|}{fixing}&\multicolumn{1}{c}{reduced}\\[-2pt]
\multicolumn{1}{c}{$m$}&\multicolumn{1}{c}{$n$}&\multicolumn{1}{r|}{time}&\multicolumn{1}{c}{$m$}&\multicolumn{1}{c}{$n$}&\multicolumn{1}{r|}{time}&\multicolumn{1}{c}{$m$}&\multicolumn{1}{c}{$n$}&\multicolumn{1}{r|}{time}&\multicolumn{1}{c}{$m$}&\multicolumn{1}{c}{$n$}&\multicolumn{1}{r|}{time}&\multicolumn{1}{c}{$m$}&\multicolumn{1}{c}{$n$}&\multicolumn{1}{r|}{time}&\multicolumn{1}{c}{time}\\[2pt]
\hline
&&&&&&&&&&&&&&&\\[-6pt] 
3324.76 &	500 &	0.22 &	391.18 &	313.38 & 0.10 &	517.59 &	500 &	5.49 &	216.77 &	179.87 &	0.31 &	6.75 &	8.17 &	6.43 &	0.00 \\ 
9819.57 &	1000 &	3.81 &	1379.35 &	1220.71 &	0.82 &	1673.98 &	1000 &	48.84 &	1281.17 &	707.19 &	5.58 &	183.99 &	165.37 &	159.11 &	0.50\\
19028.97 &	1500 &	70.78 &	2908.71 &	2270.53 &	3.42 &	3319.23 &	1500 &	181.51 &	3157.20 &	1377.20 &	29.35 &	1971.15 &	944.04 &	699.93 &	29.68\\
29924.19 & 2000 & 1350.81 & 4721.50 & 3444.67 & 7.05 & 5220.12 & 2000 & 836.22 & 5149.53 & 1949.80 & 236.20 & 4379.65 & 1693.80 & 2253.60 & 1062.38 \\ 
42038.95 &	2500 &	10347.39 &	6740.81 &	4636.68 &	17.39 &	7442.30 &	2500 &	886.65 &	7372.69 &	2447.14 &	163.88 &	6648.62 &	2224.60 &	6868.64 & 7265.90 
\end{tabular}%
}
\caption{ \ref{scp} reduction experiment (shifted geometric mean for 50 instances of each size)}
\label{tab:50inst}
\end{table} 
In Fig. \ref{fig:numinst}, we show the number of instances for each $n=1000,1500,2000,2500$,
for which the time-reduction factor is not greater than $0.1, 0.2,\ldots,1$.  
 For $n=2500$, for the 39 instances solved,  approximately 50\% of them had their times reduced to no more than 70\% of the original times, and   90\% of them had their times reduced. We do not have a plot for $n=500$ because all of those instances had their times reduced to no more than 10\% of the original times. 

\begin{figure}[ht!]
    \centering
    \begin{subfigure}
        \centering
        \includegraphics[width=0.49\textwidth]{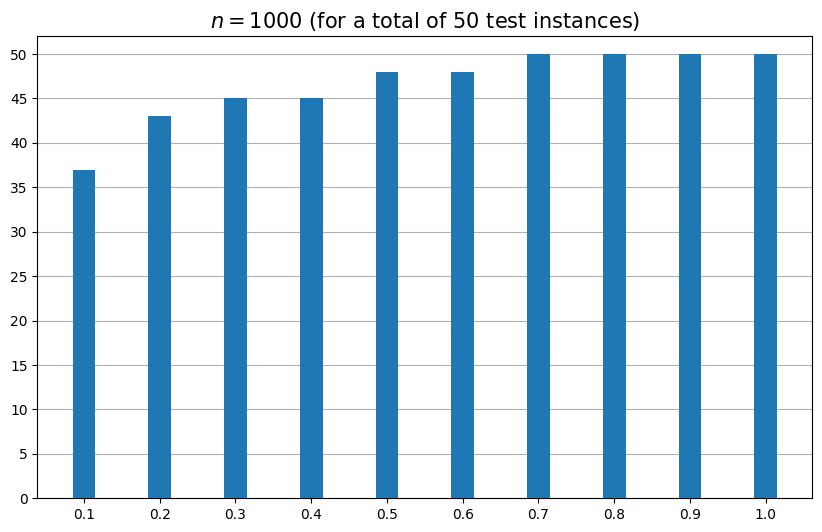}
    \end{subfigure}
    \begin{subfigure}
        \centering
        \includegraphics[width=0.49\textwidth]{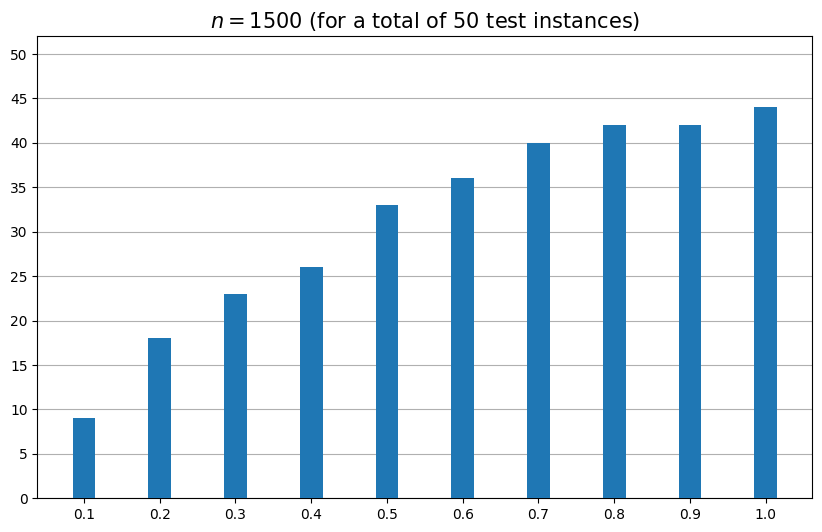}
    \end{subfigure}

    \begin{subfigure}
        \centering
        \includegraphics[width=0.49\textwidth]{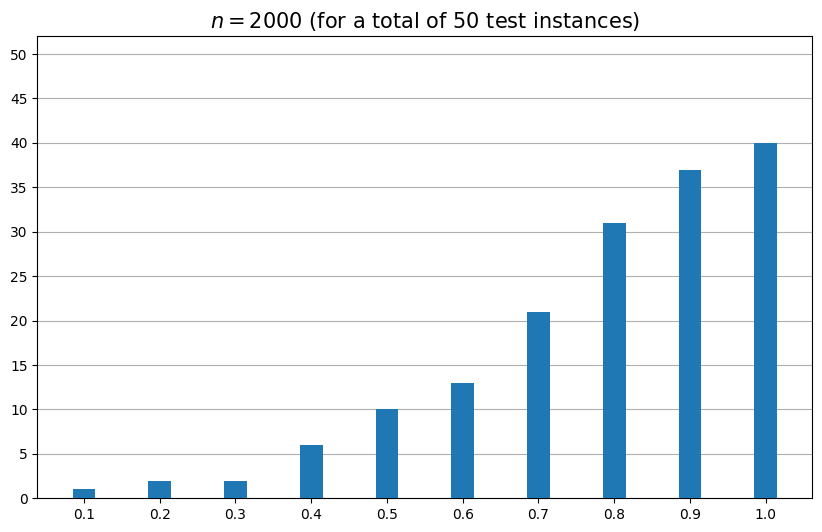}
    \end{subfigure}
    \begin{subfigure}
        \centering
        \includegraphics[width=0.49\textwidth]{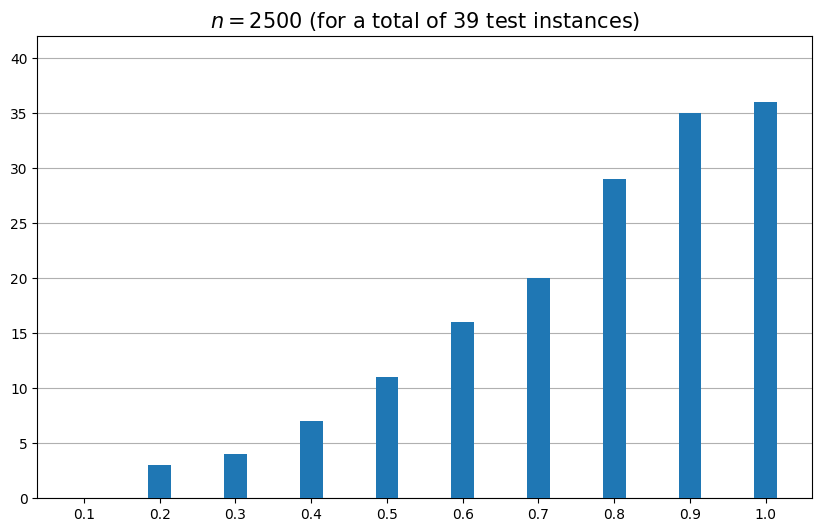}
    \end{subfigure}

    \caption{Number of \ref{scp} instances (y-axis) for each  time-reduction factor (x-axis)}\label{fig:numinst}
\end{figure}

\subsection{Limiting the number of problems {\texorpdfstring{\rm\ref{fj}}{Fj}} solved for strong fixing}\label{subsec:limitsf}
Solving problems \ref{fj} 
for all $j\in\{1,\ldots,n\}$ can take a long time and may not compensate for the effort to fix variables by the strong-fixing procedure. Here, we analyze the trade-off between the number of problems \ref{fj} solved, and the number of variables fixed at 0 and the time to solve \ref{scp}. 
We consider for this analysis,  50 instances of \ref{CP1} generated by Alg. \ref{alg:gen}, for which  the graphs $G$  have 60 nodes and around 200 edges, and   the dimension of the constraint matrix  $A\in\mathbb{R}^{m\times n}$ of \ref{scp} is given by   $n\!=\!2000$ and $m\!\approx\! 30000$. In Table \ref{tab:inslimk}, we show details for 10  test instances, specifically, the number of nodes and edges of $G$, the dimension of the constraint matrix  $A\in\mathbb{R}^{m\times n}$ of \ref{scp}, the time to solve the instances by  \texttt{Gurobi}, and the number of variables that were fixed at 0 when  applying  reduced-cost fixing ($n_{\mbox{\tiny{RC}}}^0$) and  strong fixing ($n_{\mbox{\tiny{SF}}}^0$).

\begin{table}[!ht]
\centering{
\scriptsize{
\begin{tabular}{c|ccccrcc}
\multicolumn{1}{c|}{\#}&$|\mathcal{V}(G)|$&$|\mathcal{I}(G)|$&$m$&$n$&time &$n_{\rm RC}^0$&$n_{\rm SF}^0$\\[2pt]
\hline\\[-6pt]
1  & 60 & 210 & 29227 & 2000 & 163.56  & 207 & 779 \\
2  & 60 & 210 & 29206 & 2000 & 204.67  & 172 & 682 \\
3  & 60 & 214 & 28927 & 2000 & 745.25  & 138 & 443 \\
4  & 60 & 218 & 30746 & 2000 & 6895.17 & 2   & 56  \\
5  & 60 & 214 & 31606 & 2000 & 1721.46 & 3   & 103 \\
6  & 60 & 208 & 28552 & 2000 & 346.24  & 83  & 379 \\
7  & 60 & 214 & 29785 & 2000 & 273.65  & 73  & 476 \\
8  & 60 & 216 & 30911 & 2000 & 4232.38 & 57  & 112 \\
9  & 60 & 216 & 29569 & 2000 & 601.11  & 48  & 380 \\
10 & 60 & 222 & 31244 & 2000 & 2806.55 & 8   & 152
\end{tabular}
}
}
\caption{Impact of limiting the number of \ref{fj} solved for strong fixing}\label{tab:inslimk}
\end{table}

When limiting to an integer $k<n$, the number of problems \ref{fj} that we solve, i.e., solving them only for $j\in S\subset 
\{1,\ldots,n\}$, with $|S|=k$,  the selection of the subset $S$ is crucial. We will compare results for two different strategies of selection, which we denote in the following by `Jaccard' and `Best-$\mathfrak
    {z}_{j}$'. 
Specifically, after solving \ref{fj} for $j=i$, we solve it for $j=\hat\imath$   such that the variable $z_{\hat{\imath}}$ is still not fixed, and satisfies one of the following criteria. We denote by  $J_i\subset\{1,\ldots,n\}$, the set of indices associated to the variables that are still not fixed  after solving F$_i^{\mbox{\tiny SCP(0)}}$, and we denote the value of the variable $u$ at the optimal solution of  F$_i^{\mbox{\tiny SCP(0)}}$ by $u^*(\mbox{F$_i^{\mbox{\tiny SCP(0)}}$})$.       
     \begin{itemize}
    \item Jaccard:  $A_{\cdot \hat{{\imath}}}$ is the closest column to $A_{\cdot i}$\,, according to the `Jaccard similarity'. For  a pair of columns of $A$, this similarity is measured by the cardinality of the intersection of the supports of the columns  divided by the cardinality of the union of the supports. More specifically, 
    \[
     \hat{\imath}:=\mbox{argmax}_{j\in J_i}\{|\mbox{supp}(A_{\cdot j})\cap \mbox{supp}(A_{\cdot i})|/|\mbox{supp}(A_{\cdot j})\cup \mbox{supp}(A_{\cdot i})|\}.
    \]
    When using this strategy, the  initial problem solved is the one corresponding to the point on the upper left corner of the rectangle where the points in $N$ are positioned.

    The goal of this strategy is to change the objective of the problem F$_j^{\mbox{\tiny SCP(0)}}$ with respect to the one that was solved immediately before, the least possible. As the feasible region of the problems are all the same, by doing this we expect to solve the problems more quickly by warm-starting  with the optimal solution of the preceding one. 
   
    \item Best-$\mathfrak
    {z}_{j}$: $z_{ \hat{\imath}}$ is the `first' variable not fixed, that is, it is the variable that is still not fixed and  for which the objective function value of \ref{fj} computed at $u^*(\mbox{F$_i^{\mbox{\tiny SCP(0)}}$})$ is closest to the upper bound UB (notice that the objective function value of \ref{fj} is smaller than UB for all non-fixed variables). More specifically, we set
\[
  \hat{\imath}:=\mbox{argmax}_{j\in J_i}\{w_j+(u^*(\mbox{F$_i^{\mbox{\tiny SCP(0)}}$}))^\top(\mathbf{e} -  A_{\cdot j})-\mbox{UB}\}.
\]
When using this strategy, the initial problem solved is the one corresponding to the smallest right-hand side of \eqref{ineq-fixing} for the variables still not fixed after solving  the linear relaxation of \ref{scp} and applying  reduced-cost fixing. 
\end{itemize}

To evaluate both these selection strategies of the set $S$, we  also obtain the best possible selection for the set, following the two steps below.

\begin{enumerate}
       \item We solve problems \ref{fj}, for all $j\in \{1,\ldots,n\}$, and save $S_j$, which we define as the set of indices corresponding to the variables that we can fix with the solution of \ref{fj}.
       \item Then, we solve the  problem, where for a given integer $k$, with $1\leq k\leq n$, we select the $k$ problems \ref{fj} for which we can fix the maximum number of variables. Next, we will formulate this problem.

       We are given the sets $S_{j}$, for $j\in \{1,\ldots,n\}$, and the integer $k$; and we construct the  $n\times n$ matrix $F$, such that $f_{ij}=1$ if $i\in S_{j}$, and $f_{ij}=0$ otherwise. We  define the variable $x_j\in\{0,1\}$ for each $j\in \{1,\ldots,n\}$, which is 1 if we solve \ref{fj}, and 0 otherwise;  and the variable $y_j\in\{0,1\}$, for each $j\in \{1,\ldots,n\}$, which is 1 if the variable $z_j$ could be fixed by our solution, and 0 otherwise. Then, we formulate the problem as  
\begin{equation}\label{prob:best-k}\tag{Best-$k$}
\begin{array}{l}
    \max~ \sum_{j\in N} y_j\\
    Fx - y \geq 0,\\
     \sum_{j\in N} x_j\leq k,\\
    x_j,y_j\in \{0,1\},~ \forall~j\in \{1,\ldots,n\}.
\end{array}
\end{equation}
The optimal objective value of \ref{prob:best-k} gives the maximum number of variables that  can be fixed by solving $k$ problems \ref{fj}.
\end{enumerate}

In Figs. \ref{fig:limitk} and \ref{fig:limitk2}, we present 
  results from our experiments.  
 On the horizontal axis of the plots we have $k$ as the fraction of the total number of problems \ref{fj} that are solved, i.e., for each $k=0.1, 0.2,\ldots, 1$, we solve $kn$ problems. We present plots for the two selection strategies that we propose (`Jaccard' and `Best-$\mathfrak{z}_{j}$').  For comparison purposes, we also consider the best possible selection determined by the solution of \ref{prob:best-k} (`Best-$k$'). 
    
In Fig. \ref{fig:limitk}, we present average results for the 50 instances tested.     In its first sub-figure, 
we have  fractions of the total number of variables that can be fixed at 0 by solving all the $n$ problems \ref{fj}.  We see that `Best-$\mathfrak{z}_{j}$'  is more effective than `Jaccard' in fixing variables with the solution of few problems \ref{fj}, approaching the maximum number of variables that can be fixed, given by `Best-$k$'.
   In the second sub-figure,  
    we show the average time-reduction factor for the  instances, for the two selection strategies. 
       
    In Fig. \ref{fig:limitk2}, we show  the \emph{total-}time reduction factor for the  strategy `Best-$\mathfrak{z}_{j}$', for each instance described in Table \ref{tab:inslimk},  where the word `total' represents the fact that we now count for the time spent solving problems \ref{fj}, i.e., the time spent on our pre-solving is added to the time to solve \ref{scp}. 
    
    We see that when using the `Best-$\mathfrak{z}_{j}$' selection strategy, by solving $0.4n$ problems \ref{fj}, we can reduce the solution time  for 5 out of 10 instances analyzed. The reduction in time considering the presolve time can be obtained because on average more than 80\% of the number of variables possibly fixed by strong fixing can already be fixed when solving only 40\% of the problems \ref{fj} (see Fig. \ref{fig:limitk}).

    We can conclude that limiting the number of problems \ref{fj} solved can lead to a practical use of the strong-fixing procedure. 

\begin{figure}[!ht]
\begin{center}
\includegraphics[width=0.7\textwidth]{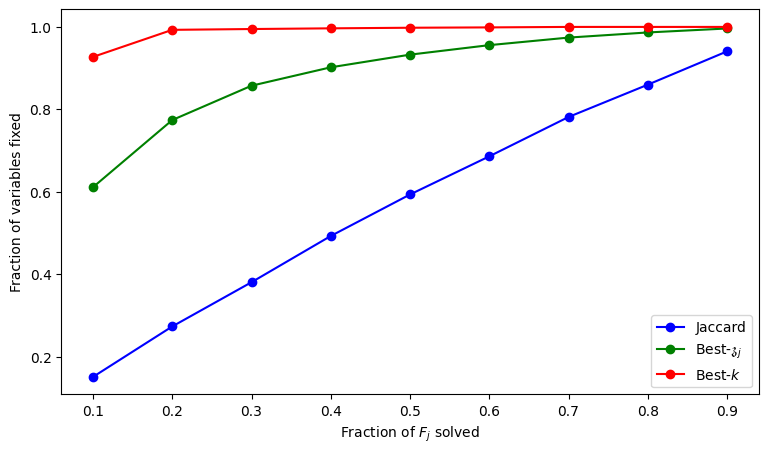}
\includegraphics[width=0.7\textwidth]{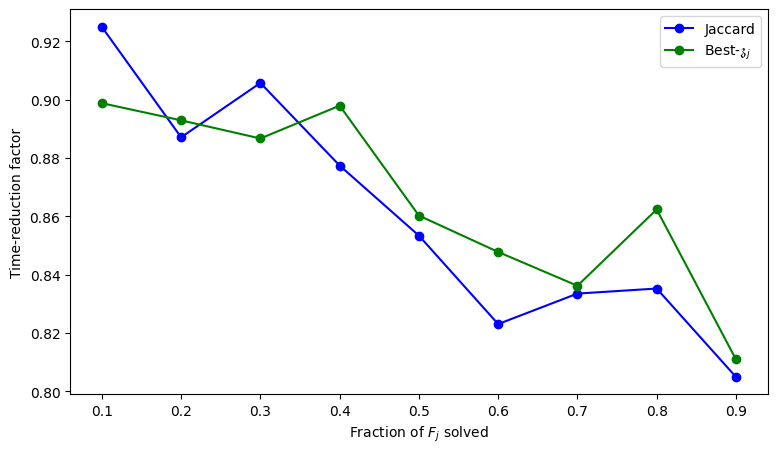}
\caption{Average impact of number of problems 
 \ref{fj} solved for strong fixing}\label{fig:limitk}
\end{center}
\end{figure}

\begin{figure}[!ht]
\begin{center}
\includegraphics[width=0.7\textwidth]{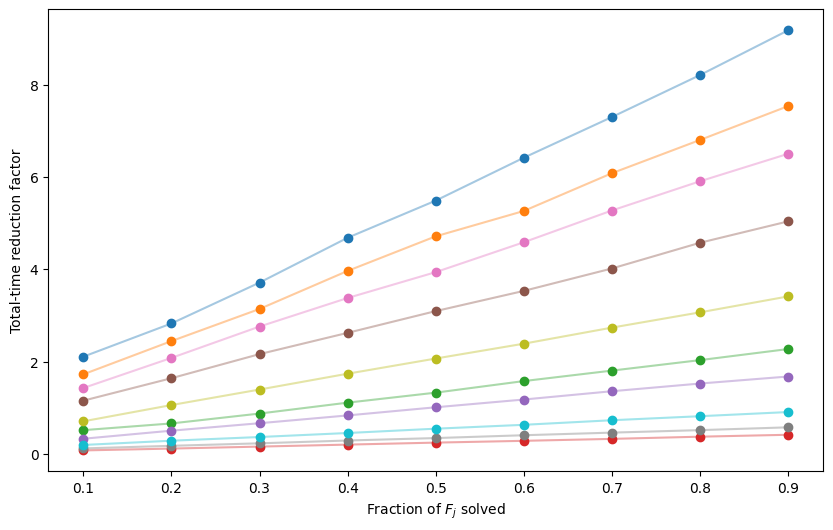}
\caption{Impact of number of problems 
 \ref{fj} solved on the  total solution  time with strong fixing for the strategy `Best-$\mathfrak
    {z}_{j}$', for each instance of Table \ref{tab:inslimk}}\label{fig:limitk2}
\end{center}
\end{figure}

\section{Selecting the center of the covering balls from a convex set: the \ref{CP2} problem}\label{sec:cp2}

We now have a version of our problem where  the exact position of the center of each ball is not prescribed. Rather, for each ball, we are given a bounded convex set where the center could be located. 
We will now assume that 
 if a ball covers
\emph{some} point in an edge, then it covers the entire edge.
This assumption is not extremely restrictive, because  we can break each edge into smaller edges, 
limiting the modeling error, albeit at the expense of  adding more nodes to the graph. 

We note that in the previous version of the problem we restrict the set where the center of each ball could be located to an enumerable set of points. Again,  we can always increase the number of points, 
limiting the modeling error, albeit at the expense of increasing the size of the set-covering problem to be solved. 

The more appropriate model will depend on which approximation is  better for the case studied, the one given by the discretization of the set where the center of each ball could be located or by the partition of the edges into smaller edges such that each one should be completely covered by a unique ball.  

Similar to $G$ before, $H$ is a straight-line embedding of a graph in $\mathbb{R}^d$, where we denote  the vertex set of $H$ by $\mathcal{V}(H)$,
and the edge set of $H$ by $\mathcal{I}(H)$, which is a finite set of closed intervals.

Before, we were given a finite set $N$ of $n$ points in $\mathbb{R}^d$, which were the 
possible locations to center a ball at. 
Now, instead, we define $N:=\{1,2,\ldots,n\}$
to be the index set of possible locations 
for locating a ball.
As before, we have a weight function $w:N\rightarrow \mathbb{R}_{++}$\,, and  covering radii $r:N\rightarrow \mathbb{R}_{++}$\,.

We let $Q_j\subset \mathbb{R}^d$ be the allowable locations associated with $j\in N$. We will assume that $Q_j$ is ``tractable'', in the sense that
we can impose membership of
a point in $Q_j$ as a constraint within a
tractable convex optimization model. 

We define the variables
\begin{itemize}
\item $x_j\in\mathbb{R}^d$, the center of a ball indexed by $j\in N$.
\item For $j\in N$, the indicator variable $z_j=1$ if our solution uses the ball indexed by  $j$ in the solution (so its cost is considered in the objective value), and $z_j=0$, otherwise. 
\item The indicator variable $y_{ij}=1$ if our solution uses the ball indexed by $j$ to covers edge $i$, and $y_{ij}=0$, otherwise.
\end{itemize}

We only need a  $y_{ij}$  variable 
if the following $d$-variable convex system has a feasible solution:
\[
x\in Q_j\,,~
\|x-a_{i(1)}\|\leq r_j\,,~
\|x-a_{i(2)}\|\leq r_j\,,
\mbox{ for edge } i:=[a_{i(1)}\,,a_{i(2)}].
\]

Then, for each edge $i\in \mathcal{I}(H)$, we let $N_i\subset N$ denote the set of $j$ for which the system above has a feasible solution, and we use $N_i$ in the model to define the constraints. 

We are now prepared to formulate our problem as the following mixed integer second order cone program (MISOCP):
\begin{equation}\tag{CP2}\label{CP2}
    \begin{array}{lll}
    \min&\!\! \sum_{j\in N}  w_j z_j\\
    \mbox{s.t.}&\!\!z_j\geq y_{ij}\,,&\!\! \forall i \!\in \!\mathcal{I}(H),\, j\!\in\! N_i\,;\\
   &\!\! \sum_{j\in N_i} 
   y_{ij} \geq 1, &\!\!\forall i \!\in\! \mathcal{I}(H);\\
   &\!\!\|x_j-a_{i(k)}\|\!\leq \!r_j \!  +\!M_{ijk}(1 - y_{ij}),
   & \!\! \forall  i\!\in\! \mathcal{I}(H),\, j\!\in \!N_i\,,\,  k\!\in\!\{1,2\};\\
   &\!\!y_{ij}\in \{0,1\},&\!\! \forall i\!\in\! \mathcal{I}(H),\,  j\!\in\! N_i\,;\\
   &\!\!z_j\in\mathbb{R}, ~ x_j\in Q_j\subset \mathbb{R}^d, &\!\!\forall j\! \in\! N.
\end{array}\end{equation}

Our model has  Big-M coefficients.
For each edge $i\in \mathcal{I}(H)$,  ball index $j\in N_i$, and $k=1,2$, we define 
$M_{ijk} :=D_{ijk}-r_j$ where $D_{ikj}$ is  the maximum distance between  any point in $Q_j$ and the  endpoint  $a_{i(k)}$\,,  with the edge $i:=[a_{i(1)}\,,a_{i(2)}]$. 
To compute $D_{ijk}$\,,
assuming that $Q_j$ is a polytope,
we compute the Euclidean distance between each
vertex of $Q_j$ and the endpoint $a_{i(k)}$ of the edge $i$.
$D_{ijk}$ is then the maximum distance computed (over all the vertices of $Q_j$)\,.
Finally, we note that although it is not explicitly imposed in the model, we  have $z_j\in\{0,1\}$, for all $j\in N$, at an optimal solution of \ref{CP2}.


\section{Strong fixing for \ref{CP2}}\label{sec:strongfixCP2}

Aiming at applying the strong fixing methodology discussed in \S\ref{sec:strongfix} to \ref{CP2}, we   now consider its continuous  relaxation, i.e., the following  second-order cone program (SOCP),  

\begin{equation}\tag{$\overline{\mbox{CP2}}$}\label{barCP2}
    \begin{array}{lll}
    \min&\!\! \sum_{j\in N}  w_j z_j\\
     
    \mbox{s.t.}&\!\!z_j\geq y_{ij}\,,& \!\!\forall i \!\in \!\mathcal{I}(H),\, j\!\in\! N_i\,;\\
   &\!\! \sum_{j\in N_i} 
   y_{ij} \geq 1, &\!\!\forall i \!\in\! \mathcal{I}(H);\\
   &\!\!\|x_j-a_{i(k)}\|\!\leq \!r_j \!  +\!M_{ijk}(1 - y_{ij}),
   &  \!\!\forall  i\!\in\! \mathcal{I}(H),\, j\!\in \!N_i\,,\,  k\!\in\!\{1,2\};\\
   &\!\!y_{ij}\geq 0,& \!\! \forall i\!\in\! \mathcal{I}(H),\,  j\!\in\! N_i\,;\\
   &\!\!\! z_j\in\mathbb{R}, ~ x_j\in Q_j\subset \mathbb{R}^d, &\!\!\forall j\! \in\! N,
\end{array}\end{equation}
and the associated dual problem. To simplify the presentation, we will assume here that  $Q_j$ is a polytope. For all  $j \in N$, let $Q_{1j}\,,Q_{2j}$ be given vectors in $\mathbb{R}^d$, and define the polytope $Q_j$ by
$
Q_j:=\{x\in\mathbb{R}^d~:~ Q_{2j}\leq x \leq Q_{1j}\}.
$ In this case, defining the sets $\mathcal{I}^j\subset \mathcal{I}(H)$, for all  $j\in N$, as 
$
\mathcal{I}^j:=\{i\in\mathcal{I}(H) ~:~j\in N_i\},
$ we can formulate the Lagrangian dual problem of  \ref{barCP2}  as (see  
\ref{app:dualCP2}) 
\begin{equation}\tag{$\mbox{D2}$}\label{DRCP2}
\begin{array}{lll}
    \!\!\!\max &\multicolumn{2}{l}{\!\displaystyle
    \sum_{j\in N} (\theta_{2j}^\top Q_{2j}-\theta_{1j}^\top Q_{1j}) + \!\sum_{i \in \mathcal{I}(H)} (\mu_i + \sum_{j \in N_i}\sum_{k=1}^2 (\gamma_{ijk}^\top  a_{i(k)} - \nu_{ijk} D_{ijk})  ) }  \\
    \text{s.t.}&\! w_j- \displaystyle\sum_{i \in \mathcal{I}^j} \lambda_{ij} = 0,&        \!\!\!\!\forall j \in N;\\
     &\!\lambda_{ij}-\mu_i + \!
     \displaystyle\sum_{k=1}^2\nu_{ijk} M_{ijk}\! -\!\beta_{ij}=  0,&   \!\!\!\forall  i\in \mathcal{I}(H),\,   j \in N_i\,;\\
    & \!\theta_{1j} -\theta_{2j} -\displaystyle\sum_{i \in \mathcal{I}^j} \displaystyle\sum_{k=1}^2\gamma_{ijk}=  0,&    \!\!\!\!\forall   j \in N;\\
    &\!||\gamma_{ijk}||\leq \nu_{ijk}\,,&    \!\!\!\!\forall    i\in \mathcal{I}(H),\, j \in N_i\,,\,k\in \{1,2\};\\
    &\!\lambda_{ij},\mu_i,\nu_{ijk},\beta_{ij},\gamma_{ijk} \geq 0,&    \!\!\!\!\forall   i\in \mathcal{I}(H),\, j \in N_i\,,\,k\in \{1,2\};\\
    &\!\theta_{1j},\theta_{2j} \geq 0,&    \!\!\!\!\forall j \in N;\\
    &\!\lambda_{ij},\mu_i,\nu_{ijk},\beta_{ij} \in \mathbb{R},&    \!\!\!\!\forall   i\in \mathcal{I}(H),\, j \in N_i\,,k\in \{1,2\};\\
    &\!\gamma_{ijk} \in \mathbb{R}^d,&      \!\!\!\!\forall i\in \mathcal{I}(H),\,j \in N_i\,,  k\in \{1,2\};\\
    &\!\theta_{1j},\theta_{2j} \in \mathbb{R}^d,&  \!\!\!\!\forall   j \in N.
    \end{array}
    \end{equation}

Based on convex duality, in Thm. \ref{thm:fix_dopt2}  we address the generalization  of the technique  of \emph{reduced-cost fixing}, discussed in \S\ref{sec:strongfix}, to \ref{CP2}. The result is well known and has been widely used in the literature (for example, see \cite{Ryoo1996ABA}). For completeness, we present its proof for our particular case in \ref{app:fixCP2}.

\begin{theorem}\label{thm:fix_dopt2}
Let UB be the objective-function value of a feasible solution for  {\rm\ref{CP2}}, and let $(\hat{\lambda},\hat{\mu},\hat{\nu},\hat{\beta},\hat{\gamma},\hat{\theta}_1,\hat{\theta}_2)$ be a feasible solution for  {\rm\ref{DRCP2}} with objective value $\hat\xi$.
Then, for every optimal solution $(z^\star,y^\star,x^\star)$ for {\rm\ref{CP2}}, we have:
\begin{align}
     &y_{ij}^\star = 0,\quad \forall i\in\mathcal{I}(H), j \in N_i ~ \text{such that} ~ \hat{\beta}_{ij} > UB -\hat\xi.\label{ineq-fixing2}
     \end{align}
 \end{theorem}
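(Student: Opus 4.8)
The plan is to run exactly the argument behind Theorem~\ref{thm:fix_dopt}, with LP duality replaced by (conic) weak duality for the SOCP relaxation \ref{barCP2}. The one fact that does the work is: for every feasible point $(z,y,x)$ of \ref{barCP2}, every feasible point $(\hat\lambda,\hat\mu,\hat\nu,\hat\beta,\hat\gamma,\hat\theta_1,\hat\theta_2)$ of \ref{DRCP2} with objective value $\hat\xi$, and every fixed pair $i_0\in\mathcal{I}(H)$, $j_0\in N_{i_0}$,
\[
\sum_{j\in N} w_j z_j \;\ge\; \hat\xi + \hat\beta_{i_0 j_0}\, y_{i_0 j_0}\,.
\]
Granting this, the theorem follows at once: an optimal $(z^\star,y^\star,x^\star)$ of \ref{CP2} is feasible for \ref{barCP2} and has $\sum_j w_j z_j^\star\le UB$, so applying the inequality to it gives $UB \ge \hat\xi + \hat\beta_{i_0 j_0} y_{i_0 j_0}^\star \ge \hat\xi$; hence $UB-\hat\xi\ge 0$, and the hypothesis $\hat\beta_{i_0 j_0}>UB-\hat\xi$ forces $\hat\beta_{i_0 j_0}>0$ and $y_{i_0 j_0}^\star \le (UB-\hat\xi)/\hat\beta_{i_0 j_0}<1$. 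Since $y_{i_0 j_0}^\star\in\{0,1\}$, we conclude $y_{i_0 j_0}^\star=0$.

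To prove the displayed inequality I would form, from a feasible $(z,y,x)$ and the dual-feasible multipliers, the nonnegative ``complementarity'' quantity $\Delta$ equal to the sum of: $\hat\lambda_{ij}(z_j-y_{ij})$ over $i,j\in N_i$; $\hat\mu_i(\sum_{j\in N_i}y_{ij}-1)$ over $i$; $\hat\nu_{ijk}(r_j+M_{ijk}(1-y_{ij})-\|x_j-a_{i(k)}\|)$ over $i,j\in N_i,k$; the conic residual $\hat\nu_{ijk}\|x_j-a_{i(k)}\|+\hat\gamma_{ijk}^\top(x_j-a_{i(k)})$ over $i,j\in N_i,k$; $\hat\theta_{1j}^\top(Q_{1j}-x_j)$ and $\hat\theta_{2j}^\top(x_j-Q_{2j})$ over $j$; and $\hat\beta_{ij}y_{ij}$ over $i,j\in N_i$. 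Each summand is $\ge 0$ on feasible points: all but the conic one because a nonnegative multiplier multiplies a nonnegative primal slack (or a nonnegative variable), and the conic one by Cauchy--Schwarz together with $\|\hat\gamma_{ijk}\|\le\hat\nu_{ijk}$. Hence $\Delta\ge 0$, and in fact $\Delta\ge\hat\beta_{i_0 j_0}y_{i_0 j_0}$ since that is one of its nonnegative summands. The remaining, purely mechanical step is the identity $\sum_j w_j z_j=\hat\xi+\Delta$: expanding $\Delta$, the coefficient of each $z_j$ vanishes by the first equality constraint of \ref{DRCP2}, the coefficient of each $y_{ij}$ by the second, the coefficient of each $x_j$ by the third (after the two $\|x_j-a_{i(k)}\|$ contributions cancel), and the constant terms reassemble into the objective of \ref{DRCP2} once one substitutes $M_{ijk}=D_{ijk}-r_j$. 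Combining $\sum_j w_j z_j=\hat\xi+\Delta$ with $\Delta\ge\hat\beta_{i_0 j_0}y_{i_0 j_0}$ gives the claim.

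The only place that needs care is the second-order-cone bookkeeping: one must take the conic residual with the sign that makes $\hat\nu_{ijk}\|x_j-a_{i(k)}\|+\hat\gamma_{ijk}^\top(x_j-a_{i(k)})\ge 0$ (not with a minus), so that the $x_j$-coefficient in $\Delta$ matches the dual equation $\theta_{1j}-\theta_{2j}-\sum_{i\in\mathcal{I}^j}\sum_k\gamma_{ijk}=0$ and the $\|x_j-a_{i(k)}\|$ terms arising from the Big-M slack and from the conic residual cancel exactly. Everything else is the same computation already carried out to derive \ref{DRCP2} in \ref{app:dualCP2}, so no genuinely new difficulty arises; in essence this is weak duality with a single complementarity term kept. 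An equivalent route, if one prefers to avoid rewriting that computation, is to append $y_{i_0 j_0}\ge 1$ to \ref{barCP2}: its optimum lower-bounds the best value of \ref{CP2} among solutions with $y_{i_0 j_0}^\star=1$, and $(\hat\lambda,\ldots,\hat\theta_2)$ augmented by the multiplier $\hat\beta_{i_0 j_0}$ on the new row is dual feasible there with value $\hat\xi+\hat\beta_{i_0 j_0}>UB$, contradicting optimality in the $y_{i_0 j_0}^\star=1$ case.
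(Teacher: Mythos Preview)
Your proof is correct. Your primary route, however, differs from the paper's: the paper argues by \emph{modifying the problem}. It adjoins the equality $y_{\hat\imath\hat\jmath}=1$ to \ref{CP2}, observes that the Lagrangian dual of the relaxation of the modified problem differs from \ref{DRCP2} only by a free multiplier $\upsilon$ entering the $y_{\hat\imath\hat\jmath}$-row and the objective, and then exhibits a feasible point of that modified dual---namely $(\hat\lambda,\hat\mu,\hat\nu,\tilde\beta,\hat\gamma,\hat\theta_1,\hat\theta_2,\upsilon)$ with $\tilde\beta_{\hat\imath\hat\jmath}=\hat\beta_{\hat\imath\hat\jmath}+\upsilon$ and $\upsilon=-\hat\beta_{\hat\imath\hat\jmath}$---whose value is $\hat\xi+\hat\beta_{\hat\imath\hat\jmath}$. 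If this exceeds $UB$, no optimal solution can have $y_{\hat\imath\hat\jmath}=1$. Your alternative route at the end (append $y_{i_0j_0}\ge 1$ and use $\hat\beta_{i_0j_0}$ as the multiplier on the new row, with $\beta_{i_0j_0}$ correspondingly absorbed) is precisely this argument.

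Your main route instead establishes the pointwise inequality $\sum_j w_j z_j \ge \hat\xi + \hat\beta_{i_0j_0}\,y_{i_0j_0}$ directly, by writing the weak-duality gap $\Delta=\sum_j w_j z_j-\hat\xi$ as a sum of nonnegative complementarity terms (linear slacks times nonnegative multipliers, plus the conic residuals $\hat\nu_{ijk}\|x_j-a_{i(k)}\|+\hat\gamma_{ijk}^\top(x_j-a_{i(k)})\ge 0$ via Cauchy--Schwarz and $\|\hat\gamma_{ijk}\|\le\hat\nu_{ijk}$) and isolating the summand $\hat\beta_{i_0j_0}y_{i_0j_0}$. This is a clean, self-contained weak-duality computation that avoids introducing a modified problem and dual; the paper's approach is more modular in that it reuses the already-derived \ref{DRCP2} wholesale and mirrors the proof of Theorem~\ref{thm:fix_dopt}. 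Both yield the same bound and neither needs strong duality. One small wording quibble: when you say ``the coefficient of each $z_j$ vanishes,'' you mean in the difference $\sum_j w_j z_j-\hat\xi-\Delta$; in $\Delta$ itself the $z_j$-coefficient is $\sum_{i\in\mathcal{I}^j}\hat\lambda_{ij}=w_j$, which is exactly what makes the identity hold.
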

 
We observe that any feasible solution to \ref{DRCP2} can be used in \eqref{ineq-fixing2}. Then, for all $(i,j)$, with  $i\in\mathcal{I}(H)$ and $j \in N_i$\,, we propose the solution of 
\begin{align*}
\mathfrak{z}_{ij}^{\mbox{\tiny CP2}}:=&\max \beta_{ij}\! + \!\!\sum_{j\in N} (\theta_{2j}^\top Q_{2j}-\theta_{1j}^\top Q_{1j}) 
+ \!\!\sum_{i \in \mathcal{I}(H)}\! (\mu_i + \!\!\sum_{j \in N_i}\sum_{k=1}^2 (\gamma_{ijk}^\top a_{i(k)} - \nu_{ijk} D_{ijk}) )\\
&\quad \mbox{ s.t. } ({\lambda},{\mu},{\nu},{\beta},{\gamma},{\theta}_1,{\theta}_2) \mbox{ be a feasible solution to  \ref{DRCP2}.} \label{fj2}\tag{F$_{ij}^{\mbox{\tiny CP2}}$}
\end{align*}
For each pair $(i,j)$ such that $i\in\mathcal{I}(H),~ j \in N_i$\,, if there is a feasible solution $(\hat{\lambda},\hat{\mu},\hat{\nu},\hat{\beta},\hat{\gamma},\hat{\theta}_1,\hat{\theta}_2)$ to \ref{DRCP2} that can be used in \eqref{ineq-fixing2} to fix $y_{ij}$ at 0, then  the optimal solution of \ref{fj2} has objective value greater than $UB$ and can be used as well. 
Thus, we can consider solving all problems \ref{fj2} to fix the maximum number of variables $y_{ij}$ in \ref{CP2} at 0. However, from the following result, we see that we can still achieve the same goal by solving a smaller number of problems.

\begin{theorem}\label{thm:fix_z}
    Suppose that we are able to conclude from Thm. \ref{thm:fix_dopt2} that  for every optimal solution $(z^\star,y^\star,x^\star)$ for {\rm\ref{CP2}}, we have $y_{\hat{\imath}\hat{\jmath}}^*=0$, for a given pair $(\hat{\imath},\hat{\jmath})$, with $\hat{\imath}\in \mathcal{I}(H)$, $\hat{\jmath}\in N_{\hat{\imath}}$\,. Then, we may also conclude that $z_{\hat{\jmath}}^*=0$ and
    $y_{i\hat{\jmath}}^* = 0$, for all $i\in \mathcal{I}^{\hat{\jmath}}$,   for every optimal solution $(z^\star,y^\star,x^\star)$ for {\rm\ref{CP2}}.  
\end{theorem}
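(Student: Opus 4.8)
The plan is to turn the hypothesis into an ordinary reduced-cost fixing of the single variable $z_{\hat{\jmath}}$, carried out with a modification of the dual certificate that already fixed $y_{\hat{\imath}\hat{\jmath}}$.

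\emph{Reduction.} First I would record that, in any optimal solution of \ref{CP2}, one has $z_{\hat{\jmath}}^\star=0$ if and only if $y_{i\hat{\jmath}}^\star=0$ for every $i\in\mathcal{I}^{\hat{\jmath}}$: the constraints $z_{\hat{\jmath}}\ge y_{i\hat{\jmath}}\ge 0$ give the forward implication, and, since $w_{\hat{\jmath}}>0$ and (as $\mathcal{I}^{\hat{\jmath}}\neq\emptyset$) the constraints force $z_{\hat{\jmath}}\ge 0$, optimality forces $z_{\hat{\jmath}}^\star=\max_{i\in\mathcal{I}^{\hat{\jmath}}}y_{i\hat{\jmath}}^\star$, which gives the converse. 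Hence it suffices to produce a dual certificate forcing $z_{\hat{\jmath}}^\star=0$.

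\emph{A reduced cost for $z_{\hat{\jmath}}$.} Add the valid redundant inequality $z_{\hat{\jmath}}\ge 0$ to \ref{barCP2}; in the resulting dual the row associated with $z_{\hat{\jmath}}$ becomes $\sigma_{\hat{\jmath}}:=w_{\hat{\jmath}}-\sum_{i\in\mathcal{I}^{\hat{\jmath}}}\lambda_{i\hat{\jmath}}\ge 0$, the objective is unchanged, and the weak-duality identity underlying Thm.~\ref{thm:fix_dopt2} gives $w^\top z\ge \xi+\sigma_{\hat{\jmath}}z_{\hat{\jmath}}$ for every primal-feasible $(z,y,x)$ and every dual-feasible point of objective $\xi$. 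So, exactly as in Thm.~\ref{thm:fix_dopt2}, it is enough to exhibit one dual-feasible point with $\sigma_{\hat{\jmath}}>UB-\xi$. Now take the certificate $(\hat{\lambda},\hat{\mu},\hat{\nu},\hat{\beta},\hat{\gamma},\hat{\theta}_1,\hat{\theta}_2)$, of objective $\hat{\xi}$, that was used in Thm.~\ref{thm:fix_dopt2} to conclude $\hat{\beta}_{\hat{\imath}\hat{\jmath}}>UB-\hat{\xi}$; we may assume it is optimal for \ref{fj2} with the pair $(\hat{\imath},\hat{\jmath})$ (if some dual point certifies the fixing, so does the \ref{fj2}-optimal one). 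Since the objective of \ref{DRCP2} involves neither $\lambda$ nor $\beta$, I would keep $\hat{\mu},\hat{\nu},\hat{\gamma},\hat{\theta}$ fixed and lower each $\lambda_{i\hat{\jmath}}$ to the smallest value compatible with $\beta_{i\hat{\jmath}}\ge 0$, namely $\max\{d_i,0\}$ with $d_i:=\hat{\mu}_i-\sum_{k}\hat{\nu}_{i\hat{\jmath}k}M_{i\hat{\jmath}k}$, transferring the freed mass into $\sigma_{\hat{\jmath}}$. One checks this is dual-feasible with the same objective $\hat{\xi}$; and since $\max\{d_i,0\}\le\hat{\lambda}_{i\hat{\jmath}}$ and $\hat{\lambda}_{\hat{\imath}\hat{\jmath}}=\hat{\beta}_{\hat{\imath}\hat{\jmath}}+d_{\hat{\imath}}$, the new value $\sigma_{\hat{\jmath}}=\sum_i(\hat{\lambda}_{i\hat{\jmath}}-\max\{d_i,0\})$ satisfies $\sigma_{\hat{\jmath}}\ge\hat{\beta}_{\hat{\imath}\hat{\jmath}}+\min\{d_{\hat{\imath}},0\}$.

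\emph{The main obstacle}, and the only nontrivial point, is to guarantee $d_{\hat{\imath}}\ge 0$, i.e.\ $\hat{\mu}_{\hat{\imath}}\ge\hat{\nu}_{\hat{\imath}\hat{\jmath}1}M_{\hat{\imath}\hat{\jmath}1}+\hat{\nu}_{\hat{\imath}\hat{\jmath}2}M_{\hat{\imath}\hat{\jmath}2}$: then $\sigma_{\hat{\jmath}}\ge\hat{\beta}_{\hat{\imath}\hat{\jmath}}>UB-\hat{\xi}$ and we are done. This is automatic when $M_{\hat{\imath}\hat{\jmath}k}\le 0$. In general I would use that the certificate is \ref{fj2}-optimal for $(\hat{\imath},\hat{\jmath})$: raising $\nu_{\hat{\imath}\hat{\jmath}k}$ by $\varepsilon$ and compensating through $\beta_{\hat{\imath}\hat{\jmath}}$ changes the \ref{fj2}-objective by $\varepsilon(M_{\hat{\imath}\hat{\jmath}k}-D_{\hat{\imath}\hat{\jmath}k})=-\varepsilon r_{\hat{\jmath}}<0$, so at optimality $\hat{\nu}_{\hat{\imath}\hat{\jmath}k}$ sits at a lower bound; the bound $\beta_{\hat{\imath}\hat{\jmath}}=0$ is excluded because $\hat{\beta}_{\hat{\imath}\hat{\jmath}}>UB-\hat{\xi}\ge 0$, and after a preliminary normalization that drives $\hat{\gamma}_{\hat{\imath}\hat{\jmath}k}$ (hence the coupling with $\hat{\theta}_{\hat{\jmath}}$ through the $\theta_{1j}-\theta_{2j}=\sum\gamma_{ijk}$ rows) to zero, the only remaining active bound is $\nu_{\hat{\imath}\hat{\jmath}k}\ge 0$, giving $\hat{\nu}_{\hat{\imath}\hat{\jmath}k}=0$ and $d_{\hat{\imath}}=\hat{\mu}_{\hat{\imath}}\ge 0$. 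Finally, having established $z_{\hat{\jmath}}^\star=0$ for every optimal solution of \ref{CP2}, the equivalence from the first paragraph yields $y_{i\hat{\jmath}}^\star=0$ for all $i\in\mathcal{I}^{\hat{\jmath}}$, completing the proof.
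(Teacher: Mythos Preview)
Your route is entirely different from the paper's. The paper gives a three-line primal argument: if some optimum had $z_{\hat{\jmath}}^\star=1$, then flipping $y_{\hat{\imath}\hat{\jmath}}$ to $1$ leaves the objective unchanged and (the paper asserts) yields another optimum with $y_{\hat{\imath}\hat{\jmath}}=1$, contradicting the hypothesis. You instead try to convert the dual certificate for $y_{\hat{\imath}\hat{\jmath}}$ into one for $z_{\hat{\jmath}}$ by shifting mass from the $\lambda_{i\hat{\jmath}}$ into a reduced cost $\sigma_{\hat{\jmath}}$.

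There is a genuine gap at your ``main obstacle''. You need $d_{\hat{\imath}}\ge 0$ and try to force $\hat{\nu}_{\hat{\imath}\hat{\jmath}k}=0$ via ``a preliminary normalization that drives $\hat{\gamma}_{\hat{\imath}\hat{\jmath}k}$ to zero''. But $\gamma_{\hat{\imath}\hat{\jmath}k}$ enters the \ref{fj2}-objective both directly (through $\gamma_{\hat{\imath}\hat{\jmath}k}^\top a_{\hat{\imath}(k)}$) and, via the $\theta$-row, through $\theta_{2\hat{\jmath}}^\top Q_{2\hat{\jmath}}-\theta_{1\hat{\jmath}}^\top Q_{1\hat{\jmath}}$; you give no argument that this normalization preserves the \ref{fj2}-value, and in general it does not. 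In fact the theorem as stated is false, so neither your gap nor the paper's can be closed. Take $d=1$; edges $e_1=[0,1]$, $e_2=[9,10]$; ball $1$ with $Q_1=[0,10]$, $r_1=1$, $w_1=1$; ball $2$ with $Q_2=[5,5]$, $r_2=100$, $w_2=100$; ball $3$ with $Q_3=[0.5,0.5]$, $r_3=1$, $w_3=0.5$. Then $1\in N_{e_1}\cap N_{e_2}$, and the unique \ref{CP2}-optimum (value $1.5$) uses ball $1$ at $x_1\in[9,10]$ to cover $e_2$ and ball $3$ to cover $e_1$, so $z_1^\star=1$ while $y_{e_1,1}^\star=0$. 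Yet imposing $y_{e_1,1}=1$ in \ref{barCP2} forces $x_1\in[0,1]$, hence $y_{e_2,1}\le 1/9$, $z_2\ge 8/9$, and value $\ge 809/9>1.5$, so Thm.~\ref{thm:fix_dopt2} \emph{does} certify $y_{e_1,1}^\star=0$; the hypothesis holds with $(\hat{\imath},\hat{\jmath})=(e_1,1)$ but the conclusion $z_1^\star=0$ fails. The paper's proof breaks at exactly the point your dual transfer does: flipping $y_{\hat{\imath}\hat{\jmath}}$ to $1$ can violate the cone constraint $\|x_{\hat{\jmath}}^\star-a_{\hat{\imath}(k)}\|\le r_{\hat{\jmath}}$ when $x_{\hat{\jmath}}^\star$ is positioned to cover a different edge in $\mathcal{I}^{\hat{\jmath}}$, so the ``modified optimum'' need not be feasible.
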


\begin{proof}
    If $y_{\hat{\imath}\hat{\jmath}}^*=0$ at every optimal solution for \ref{CP2}, we cannot have $z_{\hat{\jmath}}=1$ at any optimal solution, otherwise, setting $y_{\hat{\imath}\hat{\jmath}}=1$ at this optimal solution, we would not change its objective value and therefore, we would still have an optimal solution for \ref{CP2}, contradicting the hypothesis. Then, as $z_{\hat{\jmath}}^*=0$, by the first constraints in \ref{CP2}, we see that   $y_{i\hat{\jmath}}^*=0$ for all  $i\in \mathcal{I}^{\hat{\jmath}}$.
\end{proof}

We note that we can view the fixing of $z_j$ variables motivated by Thm. \ref{thm:fix_z},
as the same as strong fixing on  $z_j$ variables with respect to a model that 
includes the redundant constraint $z\geq 0$ in
\ref{barCP2}. Following the approach discussed in \S\ref{sec:strongfix} for \ref{scp}, we can  also include the redundant constraint $z\leq \mathbf{e}$ in \ref{barCP2}, aiming at fixing  variables at 1.   

To further explain our strong fixing on $z_j$, let \ref{RCP2zj} be the continuous relaxation of \ref{CP2} with the additional redundant constraints $z\geq 0$ and $z\leq \mathbf{e}$. Let \ref{DRCP2zjapp} be the Lagrangian dual of \ref{RCP2zj}, and let $\phi$ and $\delta$ be the additional dual variables to those in \ref{DRCP2}, respectively associated with the redundant constraints. Problems \ref{RCP2zj} and \ref{DRCP2zjapp} are  presented in \ref{app:dualCP2zj}, and the proof of the following result (similar to Thm. \ref{thm:fix_dopt2}) is in \ref{app:fixCP2zj}.  

\begin{theorem}\label{thm:fix_dopt2zj}
Let UB be the objective-function value of a feasible solution for  {\rm\ref{CP2}}, and let $(\hat{\lambda},\hat{\mu},\hat{\nu},\hat{\beta},\hat{\gamma},\hat{\theta}_1,\hat{\theta}_2,\hat{\phi},\hat{\delta})$ be a feasible solution for  {\rm\ref{DRCP2zjapp}} with objective value $\hat\xi$.
Then, for every optimal solution $(z^\star,y^\star,x^\star)$ for {\rm\ref{CP2}}, we have:
\vspace{-5pt}
\begin{align}
     &z_{j}^\star = 0,\quad \forall j \in N ~ \text{such that} ~ \hat{\phi}_{j} > UB -\hat\xi,\label{ineq-fixing2zj0}\\
          &z_{j}^\star = 1,\quad \forall  j \in N ~ \text{such that} ~ \hat{\delta}_{j} > UB -\hat\xi.\label{ineq-fixing2zj1}
     \end{align}
 \end{theorem}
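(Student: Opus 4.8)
The plan is to mimic the proof of Thm.~\ref{thm:fix_dopt2} (given in \ref{app:fixCP2}), working this time with the Lagrangian of \ref{RCP2zj} --- the continuous relaxation of \ref{CP2} augmented with the redundant box constraints $z\geq\mathbf{0}$ and $z\leq\mathbf{e}$ --- in exactly the way that Thm.~\ref{thm:fix_doptat1} extended Thm.~\ref{thm:fix_dopt} for \ref{scp}. First I would fix an optimal solution $(z^\star,y^\star,x^\star)$ for \ref{CP2} and record the three facts it satisfies: it is feasible for \ref{RCP2zj} (the added constraints being redundant); $z_j^\star\in\{0,1\}$ for every $j\in N$, by the observation closing \S\ref{sec:cp2} (and in fact this holds at every optimal solution, by minimality of $w^\top z$ together with $y^\star$ binary); and $w^\top z^\star\leq UB$, since $UB$ is the value of a feasible solution of the minimization problem \ref{CP2}.

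For the main step I would write $L$ for the Lagrangian of \ref{RCP2zj} obtained by dualizing all of its constraints, attaching the multipliers $\lambda,\mu,\nu,\beta,\gamma,\theta_1,\theta_2$ exactly as in \ref{app:dualCP2} and, in addition, $\phi\geq\mathbf{0}$ to the constraints $-z\leq\mathbf{0}$ and $\delta\geq\mathbf{0}$ to $z-\mathbf{e}\leq\mathbf{0}$. For any multipliers feasible for \ref{DRCP2zjapp}, the equality constraints of \ref{DRCP2zjapp} are precisely the statements that the coefficient of each primal variable $z_j$, $y_{ij}$, $x_j$ in $L$ vanishes, so $L$ collapses to the constant $\hat\xi$. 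Evaluating $L$ instead at an arbitrary point $(z,y,x)$ feasible for \ref{CP2}, every dualized-constraint term is the product of a nonnegative multiplier with a nonpositive constraint slack and hence is $\leq 0$; peeling off the term $-\hat\phi_j z_j$ (respectively $\hat\delta_j(z_j-1)$) and bounding the sum of the remaining terms by $0$ would give
\begin{align*}
w^\top z \;\geq\; \hat\xi+\hat\phi_j z_j,\qquad w^\top z \;\geq\; \hat\xi+\hat\delta_j(1-z_j),\qquad \forall\, j\in N.
\end{align*}
Specializing these to $(z^\star,y^\star,x^\star)$ and using $w^\top z^\star\leq UB$ yields $\hat\phi_j z_j^\star\leq UB-\hat\xi$ and $\hat\delta_j(1-z_j^\star)\leq UB-\hat\xi$; then $\hat\phi_j>UB-\hat\xi$ forces $z_j^\star<1$, hence $z_j^\star=0$, and $\hat\delta_j>UB-\hat\xi$ forces $z_j^\star>0$, hence $z_j^\star=1$, using $z_j^\star\in\{0,1\}$. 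This is exactly the ``$\hat v_j$'' argument of Thm.~\ref{thm:fix_doptat1}, now run twice (once for the lower and once for the upper bound on $z$).

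The only delicate point --- and the only place any real computation hides --- is the treatment of the second-order-cone constraints $\|x_j-a_{i(k)}\|\leq r_j+M_{ijk}(1-y_{ij})$: I would need to confirm that the associated Lagrangian terms, with conic dual variables $(\nu_{ijk},\gamma_{ijk})$ constrained by $\|\gamma_{ijk}\|\leq\nu_{ijk}$, are nonpositive at every \ref{CP2}-feasible point and contribute precisely the coefficient conditions appearing in \ref{DRCP2zjapp}. But this is the identical computation already performed for \ref{DRCP2} in \ref{app:dualCP2} and invoked in the proof of Thm.~\ref{thm:fix_dopt2}, so it needs no new estimates; everything beyond it is bookkeeping, and the full argument is the one written out in \ref{app:fixCP2zj}.
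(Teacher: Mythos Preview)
Your argument is correct and yields the same key bounds as the paper, but the packaging differs from the proof in \ref{app:fixCP2zj}. There the paper \emph{modifies} \ref{CP2} by adjoining the equality $z_{\hat\jmath}=1$ (for \eqref{ineq-fixing2zj0}) or $z_{\hat\jmath}=0$ (for \eqref{ineq-fixing2zj1}), introduces a free multiplier $\upsilon$ for that new constraint, and then absorbs $\upsilon$ into $\hat\phi_{\hat\jmath}$ (respectively $\hat\delta_{\hat\jmath}$) to exhibit a feasible point for the modified dual with objective $\hat\xi+\hat\phi_{\hat\jmath}$ (respectively $\hat\xi+\hat\delta_{\hat\jmath}$); if this lower bound exceeds $UB$, the corresponding value of $z_{\hat\jmath}$ is ruled out. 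You instead stay with the single Lagrangian of \ref{RCP2zj}, use dual feasibility to cancel the linear-in-$(z,y,x)$ coefficients, and peel off the one slack term $-\hat\phi_{\hat\jmath}z_{\hat\jmath}$ or $\hat\delta_{\hat\jmath}(z_{\hat\jmath}-1)$ from the primal-feasible inequality $w^\top z\geq L$. Both routes arrive at the identical inequality $w^\top z^\star\geq\hat\xi+\hat\phi_{\hat\jmath}z_{\hat\jmath}^\star$ (respectively $\geq\hat\xi+\hat\delta_{\hat\jmath}(1-z_{\hat\jmath}^\star)$); the paper's is a ``construct a dual certificate for the restricted problem'' argument matching the style of \ref{app:fixSCP0}, \ref{app:fixSCP}, and \ref{app:fixCP2}, while yours is the direct weak-duality form of the same bound. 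One small wording fix: $L$ does not literally ``collapse to the constant $\hat\xi$'' at dual-feasible multipliers, since the conic residual $\sum_{i,j,k}(\hat\nu_{ijk}\|\rho_{ijk}\|+\hat\gamma_{ijk}^\top\rho_{ijk})$ survives; but as you note in your final paragraph, $\|\hat\gamma_{ijk}\|\leq\hat\nu_{ijk}$ makes this residual nonnegative, which is all the inequality needs.
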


Based on Thm. \ref{thm:fix_dopt2zj},  we propose the solution of the problems
\begin{align*}\label{fj2zj0}\tag{F$_{j}^{\mbox{\tiny CP2(0)}}$}
\mathfrak{z}_{j}^{\mbox{\tiny CP2(0)}}:=&\max \phi_{j}\! + \!\!\textstyle\sum_{j\in N} (\theta_{2j}^\top Q_{2j}-\theta_{1j}^\top Q_{1j} -\delta_j) \\ 
&\qquad \quad + \!\textstyle\sum_{i \in \mathcal{I}(H)} (\mu_i + \sum_{j \in N_i}\sum_{k=1}^2 (\gamma_{ijk}^\top a_{i(k)} - \nu_{ijk} D_{ijk})  )\\
&\mbox{ s.t. } ({\lambda},{\mu},{\nu},{\beta},{\gamma},{\theta}_1,{\theta}_2,\phi,\delta) \mbox{ be a feasible solution to  {\rm\ref{DRCP2zjapp}},} 
\end{align*}
and
\begin{align*}\label{fj2zj1}\tag{F$_{j}^{\mbox{\tiny CP2(1)}}$}
\mathfrak{z}_{j}^{\mbox{\tiny CP2(1)}}:=&\max \delta_{j}\! + \!\!\textstyle\sum_{j\in N} (\theta_{2j}^\top Q_{2j}-\theta_{1j}^\top Q_{1j} -\delta_j) \\
&\qquad \quad + \!\textstyle\sum_{i \in \mathcal{I}(H)} (\mu_i + \sum_{j \in N_i}\sum_{k=1}^2 (\gamma_{ijk}^\top a_{i(k)} - \nu_{ijk} D_{ijk})  )\\
&\quad \mbox{ s.t. } ({\lambda},{\mu},{\nu},{\beta},{\gamma},{\theta}_1,{\theta}_2,\phi,\delta) \mbox{ be a feasible solution to  {\rm\ref{DRCP2zjapp}},} 
\end{align*}
for all $j\in N$.
If the value of the optimal solution  of \ref{fj2zj0} (resp., \ref{fj2zj1})   is greater than $UB$, we can fix $z_j$ at 0 (resp., at 1).

Our \emph{strong fixing} procedure for \ref{CP2} fixes all possible variables $z_j$ in \ref{CP2} at 0 and 1, 
in the context of Thm. \ref{thm:fix_dopt2zj},
by using  a given upper bound $UB$ on the optimal solution value of \ref{CP2}, and solving   problems \ref{fj2zj0} and \ref{fj2zj1},  for all $j\in N$.


\section{Computational experiments for \ref{CP2}}\label{sec:compCP2}

Our numerical experiments in this section have two goals: to analyze the efficiency of the strong-fixing procedure for \ref{CP2}, and  to compare the solutions of \ref{CP2} and \ref{scp}.

We generated test instances in the same way we used to test  \ref{scp}, that is, with  Alg. \ref{alg:gen}. 
The edges in $H$ are given by all the subintervals computed in step 10 of Alg. \ref{alg:gen}, and the convex sets $Q_j$, $j\!=\!1,\ldots,n$, are boxes centered on the $n$ points generated in step 5, with sides of length $\ell\!=\!0.05$. We note that if we had $\ell\!=\!0$, \ref{CP2} would be equivalent to \ref{scp}; by increasing $\ell$, we give more flexibility to the possible locations of the centers of the circles that must cover $H$.
Because solving the MISOCP \ref{CP2} is more expensive than solving \ref{scp}, and as it gives more flexibility in selecting SLS locations, we experiment here with smaller instances and with $\nu\!=\!0.05n$. Thus, compared to the experiments in \S\ref{sec:comp}, where we set $\nu=0.03n$, we have fewer SLS candidates for a given graph with $\nu$ nodes, but we can locate them at any point in the given boxes.  We generated 10 instances for each   $n\!\in\!\{20,40,60,80,100\}$.  Our framework is the same described in \S\ref{sec:comp}. Our implementation is in \texttt{Python}, using  \texttt{Gurobi}  v. 10.0.2 to solve  the optimization problems. 



    In Table \ref{tab:cp2_sf}, we show statistics for our experiment. The number of nodes and  edges in  $H$, and the number of SLS  candidates, are given respectively by `$|\mathcal{V}(H)|$', `$|\mathcal{I}(H)|$', and `$n$'. The value of the optimal solution  for the two proposed formulations is given by `opt \ref{scp}' and `opt \ref{CP2}'. The number of SLSs installed in the optimal solutions for \ref{scp} and \ref{CP2} is given by `$\Sigma z_j^*$' for each problem. The elapsed times to solve \ref{CP2} before and after we fix variables by the strong fixing procedure are given respectively by `\texttt{Gurobi} original time' and `\texttt{Gurobi} reduced time', and the number of variables fixed at 0 and 1 by strong fixing are given respectively  by `$n_{\rm SF}^0$' and `$n_{\rm SF}^1$'. Finally, the time to run the strong fixing procedure is given by `strong fixing time'.  
    
    Comparing the optimal solution values of both models, we see that the flexibility regarding the location of SLSs is largely exploited by the solution for \ref{CP2}, which reduces the cost of SLS installations by 24\% on average.
It is interesting to note that this cost reduction is not always achieved by installing fewer SLSs. In fact, for some cases, the cost reduction was achieved by installing 3 or 4 more SLSs. 
We conclude that although \ref{CP2} is computationally harder to solve than \ref{scp}, if there are large areas where an SLS can be constructed at any point in each, it may be worthwhile to use this MISOCP formulation to optimize the chosen locations. 

    Regarding  variable fixing, we  note that no variable could be fixed by the standard reduced-cost fixing strategy, where the optimal values of the dual variables $\beta_{ij}$, corresponding to the constraints $y_{ij}\geq 0$ in \ref{barCP2},  are used in Thm. \ref{thm:fix_dopt2}. On the other hand, applying the strong fixing strategy was effective for several instances. We first applied strong fixing on the variables $y_{ij}$  by solving \ref{fj2}. Then, motivated by Thms. \ref{thm:fix_z} and \ref{thm:fix_dopt2zj}, we applied strong fixing on the variables $z_{j}$  by solving \ref{fj2zj0} and \ref{fj2zj1}, aiming to fix them at 0 or 1. We observed that the same variables $z_j$ that could be fixed at 0, by solving   \ref{fj2}, for each variable $y_{ij}$, and considering Thm. \ref{thm:fix_z}, could be fixed by solving \ref{fj2zj0} for each variable $z_j$. The clear advantage of this observation is that, in general, there are many fewer problems \ref{fj2zj0} to be solved than problems \ref{fj2}.
    Furthermore, when solving \ref{fj2zj1} we could also fix  variables $z_j$ at 1. 
    Six out of the ten smallest instances were solved to optimality solely through strong fixing, having all variables fixed at 0 or 1.

\begin{table}
\tabcolsep=5pt
\centering{
\scriptsize{
\begin{tabular}{r|rrrrrrrrrrrr}
&&&&\multicolumn{1}{c}{}&\multicolumn{1}{c}{}&&&\multicolumn{1}{c}{\texttt{Gurobi}}&&&\multicolumn{1}{c}{strong}&\multicolumn{1}{c}{\texttt{Gurobi}}\\
&&&&\multicolumn{1}{c}{opt}&\multicolumn{1}{c}{opt}&\multicolumn{1}{c}{$\Sigma z_j^*$}&\multicolumn{1}{c}{$\Sigma z_j^*$}&\multicolumn{1}{c}{original}&&\multicolumn{1}{c}{}&\multicolumn{1}{c}{fixing}&\multicolumn{1}{c}{reduced}\\
\multicolumn{1}{c|}{\#}&$|\mathcal{V}(H)|$&$|\mathcal{I}(H)|$&$n$&\ref{scp} &\ref{CP2}&\ref{scp} &\ref{CP2}&\multicolumn{1}{c}{time}&\multicolumn{1}{c}{$n_{\rm SF}^0$}&\multicolumn{1}{c}{$n_{\rm SF}^1$}&\multicolumn{1}{c}{time}&\multicolumn{1}{c}{time}\\[2pt]
\hline\\[-6pt]
1  & 10 & 161  & 20  & 0.45 & 0.25 & 4  & 3  & 6.58    & 15 & 1 & 5.80    & 1.78    \\
2  & 10 & 135  & 20  & 0.37 & 0.30 & 7  & 7  & 3.46    & 13 & 7 & 4.15    & 0.00    \\
3  & 10 & 164  & 20  & 0.38 & 0.26 & 4  & 5  & 5.75    & 14 & 3 & 5.91    & 1.84    \\
4  & 10 & 126  & 20  & 0.37 & 0.24 & 6  & 4  & 3.64    & 16 & 4 & 3.87    & 0.00    \\
5  & 10 & 189  & 20  & 0.46 & 0.35 & 3  & 2  & 8.32    & 18 & 2 & 7.15    & 0.00    \\
6  & 10 & 194  & 20  & 0.62 & 0.53 & 3  & 3  & 9.21    & 17 & 3 & 8.72    & 0.00    \\
7  & 10 & 137  & 20  & 0.27 & 0.20 & 5  & 4  & 4.46    & 16 & 4 & 4.50    & 0.00    \\
8  & 10 & 307  & 20  & 0.16 & 0.16 & 2  & 2  & 12.23   & 18 & 2 & 11.16   & 0.00    \\
9  & 10 & 165  & 20  & 0.33 & 0.27 & 3  & 3  & 7.20    & 16 & 2 & 6.37    & 1.55    \\
10 & 10 & 156  & 20  & 0.61 & 0.37 & 8  & 7  & 4.12    & 4  & 2 & 6.51    & 3.12    \\
11 & 20 & 421  & 40  & 0.62 & 0.42 & 10 & 6  & 29.34   & 14 & 1 & 33.19   & 11.71   \\
12 & 20 & 441  & 40  & 0.54 & 0.41 & 10 & 7  & 36.81   & 16 & 2 & 36.41   & 14.13   \\
13 & 20 & 466  & 40  & 0.25 & 0.19 & 6  & 3  & 66.61   & 21 & 0 & 38.69   & 13.05   \\
14 & 20 & 419  & 40  & 0.63 & 0.44 & 10 & 9  & 48.33   & 10 & 0 & 38.45   & 29.78   \\
15 & 20 & 477  & 40  & 0.37 & 0.34 & 8  & 7  & 64.52   & 5  & 1 & 42.96   & 41.14   \\
16 & 20 & 498  & 40  & 0.34 & 0.27 & 8  & 7  & 83.13   & 1  & 0 & 59.09   & 144.98   \\
17 & 20 & 536  & 40  & 0.58 & 0.43 & 8  & 8  & 60.18   & 10 & 0 & 58.58   & 24.64   \\
18 & 20 & 430  & 40  & 0.58 & 0.45 & 6  & 5  & 26.83   & 15 & 0 & 39.36   & 14.77   \\
19 & 20 & 367  & 40  & 0.37 & 0.27 & 8  & 7  & 17.09   & 12 & 1 & 30.32   & 11.75   \\
20 & 20 & 412  & 40  & 0.36 & 0.26 & 10 & 9  & 59.90   & 10 & 0 & 41.48   & 32.20   \\
21 & 30 & 791  & 60  & 0.43 & 0.32 & 7  & 10 & 229.94  & 6  & 0 & 153.03  & 175.96  \\
22 & 30 & 730  & 60  & 0.46 & 0.34 & 9  & 11 & 199.66  & 0  & 0 & 121.65  & 199.66  \\
23 & 30 & 825  & 60  & 0.45 & 0.37 & 11 & 10 & 225.36  & 0  & 0 & 155.59  & 225.36  \\
24 & 30 & 828  & 60  & 0.26 & 0.20 & 11 & 7  & 93.40   & 1  & 0 & 150.03  & 175.27   \\
25 & 30 & 827  & 60  & 0.28 & 0.20 & 7  & 8  & 197.69  & 9  & 0 & 155.22  & 150.33  \\
26 & 30 & 826  & 60  & 0.28 & 0.21 & 9  & 8  & 528.42  & 5  & 0 & 150.14  & 244.13  \\
27 & 30 & 920  & 60  & 0.63 & 0.46 & 11 & 9  & 336.27  & 0  & 0 & 181.68  & 336.27  \\
28 & 30 & 779  & 60  & 0.44 & 0.34 & 12 & 12 & 128.48  & 1  & 0 & 129.27  & 205.00  \\
29 & 30 & 832  & 60  & 0.35 & 0.27 & 10 & 8  & 178.64  & 1  & 0 & 152.29  & 176.81  \\
30 & 30 & 820  & 60  & 0.33 & 0.28 & 8  & 8  & 172.71  & 0  & 0 & 143.70  & 172.71  \\
31 & 40 & 1195 & 80  & 0.42 & 0.29 & 10 & 12 & 802.56  & 0  & 0 & 314.79  & 802.56  \\
32 & 40 & 1228 & 80  & 0.30 & 0.23 & 10 & 13 & 627.10  & 1  & 0 & 332.78  & 403.99  \\
33 & 40 & 1199 & 80  & 0.64 & 0.46 & 13 & 10 & 636.39  & 0  & 0 & 346.73  & 636.39  \\
34 & 40 & 1150 & 80  & 0.55 & 0.39 & 11 & 13 & 678.80  & 2  & 0 & 294.97  & 846.46  \\
35 & 40 & 1392 & 80  & 0.27 & 0.22 & 11 & 12 & 1186.35 & 0  & 0 & 454.88  & 1186.35 \\
36 & 40 & 1297 & 80  & 0.62 & 0.46 & 13 & 10 & 4861.33 & 0  & 0 & 362.33  & 4861.33 \\
37 & 40 & 1150 & 80  & 0.48 & 0.38 & 10 & 9  & 685.53  & 2  & 0 & 302.88  & 805.20  \\
38 & 40 & 1455 & 80  & 0.50 & 0.38 & 12 & 11 & 926.16  & 3  & 0 & 467.83  & 827.88  \\
39 & 40 & 1257 & 80  & 0.65 & 0.50 & 8  & 9  & 610.34  & 0  & 0 & 362.82  & 610.34  \\
40 & 40 & 1221 & 80  & 0.25 & 0.18 & 10 & 12 & 1484.83 & 0  & 0 & 344.11  & 1484.83 \\
41 & 50 & 1713 & 100 & 0.42 & 0.34 & 10 & 10 & 7368.55 & 0  & 0 & 743.31  & 7368.54 \\
42 & 50 & 1788 & 100 & 0.29 & 0.24 & 9  & 9  & 7282.17 & 0  & 0 & 791.29  & 7282.17 \\
43 & 50 & 2101 & 100 & 0.42 & 0.35 & 9  & 8  & 3981.00 & 0  & 0 & 1059.55 & 3981.00 \\
44 & 50 & 1695 & 100 & 0.47 & 0.39 & 11 & 9  & 7364.34 & 0  & 0 & 665.12  & 7364.34 \\
45 & 50 & 1856 & 100 & 0.30 & 0.26 & 12 & 8  & 7397.04 & 0  & 0 & 798.32  & 7397.04 \\
46 & 50 & 1866 & 100 & 0.49 & 0.34 & 9  & 8  & 5948.14 & 3  & 0 & 854.47  & 2877.52 \\
47 & 50 & 1811 & 100 & 0.35 & 0.29 & 10 & 9  & 7376.13 & 0  & 0 & 732.93  & 7376.13 \\
48 & 50 & 1914 & 100 & 0.46 & 0.40 & 12 & 10 & 7402.76 & 0  & 0 & 845.46  & 7402.76 \\
49 & 50 & 1850 & 100 & 0.38 & 0.30 & 8  & 12 & 5292.99 & 0  & 0 & 815.92  & 5292.99 \\
50 & 50 & 1744 & 100 & 0.60 & 0.48 & 11 & 8  & 7376.88 & 0  & 0 & 722.58  & 7376.88
\end{tabular}
}}
\caption{\ref{CP2} reduction experiment and solution comparison with \ref{scp}}
\label{tab:cp2_sf}
\end{table}

In Table \ref{tab:cp2mean}, we show the shifted geometric mean with shift parameter 1, for the same statistics presented in Table \ref{tab:cp2_sf}, for 10 instances of each $n=20,40,60,80,100$.

\begin{table}
\tabcolsep=5pt
\centering{
\scriptsize{
\begin{tabular}{rrrrrrrrrrrr}
&&&\multicolumn{1}{c}{}&\multicolumn{1}{c}{}&&&\multicolumn{1}{c}{\texttt{Gurobi}}&&&\multicolumn{1}{c}{strong}&\multicolumn{1}{c}{\texttt{Gurobi}}\\
&&&\multicolumn{1}{c}{opt}&\multicolumn{1}{c}{opt}&\multicolumn{1}{c}{$\Sigma z_j^*$}&\multicolumn{1}{c}{$\Sigma z_j^*$}&\multicolumn{1}{c}{original}&&\multicolumn{1}{c}{}&\multicolumn{1}{c}{fixing}&\multicolumn{1}{c}{reduced}\\
$|\mathcal{V}(H)|$&$|\mathcal{I}(H)|$&$n$&\ref{scp} &\ref{CP2}&\ref{scp} &\ref{CP2}&\multicolumn{1}{c}{time}&\multicolumn{1}{c}{$n_{\rm SF}^0$}&\multicolumn{1}{c}{$n_{\rm SF}^1$}&\multicolumn{1}{c}{time}&\multicolumn{1}{c}{time}\\[2pt]
\hline\\[-6pt] 
        10 & 167.95 & 20 & 0.40 & 0.29 & 4.20 & 3.72 & 6.06 & 13.89 & 2.73 & 6.14 & 0.56  \\ 
        20 & 444.36 & 40 & 0.46 & 0.34 & 8.27 & 6.57 & 44.68 & 9.64 & 0.37 & 40.96 & 23.79  \\ 
        30 & 816.53 & 60 & 0.39 & 0.30 & 9.36 & 8.99 & 205.64 & 1.25 & 0.00 & 148.48 & 200.94  \\ 
        40 & 1250.92 & 80 & 0.46 & 0.34 & 10.71 & 11.01 & 968.02 & 0.53 & 0.00 & 354.39 & 951.71  \\ 
        50 & 1830.54 & 100 & 0.42 & 0.34 & 10.02 & 9.03 & 6559.97 & 0.15 & 0.00 & 796.92 & 6100.54 
\end{tabular}
}}
\caption{\ref{CP2} reduction experiment and solution comparison with \ref{scp} (shifted geometric mean for 10 instances of each size)}
\label{tab:cp2mean}
\end{table}
    
    Although a more careful implementation of strong fixing is needed to make it more practical, as concluded in \S\ref{sec:comp} for \ref{scp}, we see it as a promising procedure to reduce the sizes of the instances of \ref{CP2} and lead to their faster solution.
Comparing the solution times of \ref{CP2} before and after we fixed the variables, on the 31 instances where variables were fixed, we have an average time-reduction factor of 60\%. The time-reduction factor is the ratio of the elapsed time used by \texttt{Gurobi} to solve the problem after we applied strong fixing, to the elapsed time used by \texttt{Gurobi} to solve the original problem. We would like to highlight the impact of strong fixing on instance 46, for which fixing 3 variables at 0 led to a time-reduction factor of 48\% ($2877.52/5948.14$), and even if we consider the time spent on the strong fixing procedure we obtain a time-reduction factor of 62\%($(2877.52+854.47)/5948.14$).

We note that for \ref{CP2}, we do not analyze strategies to limit the number of problems \ref{fj2zj0} and \ref{fj2zj1} solved, as we did for \ref{scp} in \S\ref{subsec:limitsf}. The reason is that the time to apply the strong-fixing procedure is not as significant compared to the time to solve \ref{CP2} as it is compared to the time to solve \ref{scp}, especially considering that for the instances solved for \ref{CP2}, the value of $n$ is much smaller.



\section{Outlook}\label{sec:conc}

\noindent \emph{Extensions for \ref{scp}.} Our integer-programming approach can be extended to other geometric settings. We can use other metrics, or replace  balls $B(x,r(x))$ with arbitrary convex sets $B(x)$ for which we can compute the intersection of with each edge $I\in \mathcal{I}(G)$. In this way, $N$ is just an index set, and we
only need a pair of line searches, to determine the
endpoints of the intersection of 
$B(x)$ with each $I\in \mathcal{I}(G)$. 
We still have $|\mathcal{C}(I)|\leq 1\!+\!2n$, for each 
$I\in \mathcal{I}(G)$, and so the 
number of covering constraints in \ref{scp} is
at most $(1\!+\!2n)|\mathcal{I}(G)|$. 
Finally, we could take $G$ to be geodesically embedded on a sphere and the balls replaced by geodesic balls. 

\smallskip 
\noindent {\emph{Solvable cases for \ref{scp}.}}
We may seek to generalize the 
 algorithm mentioned in \S\ref{sec:fork} to arbitrary families of subtrees of unit-grid-graph trees (i.e., eliminate the fork-free condition but restrict to unit-grid graphs). 

 \smallskip 
\noindent {\emph{Extending our computational work on strong fixing.}}
An important direction is to reduce the time for strong fixing, so as to get a large 
number of variables fixed without solving 
all of the fixing subproblems
Additionally, our strong-fixing methodology is very general and could work well for other classes of mixed-integer optimization problems. In particular, based on our results, the method looks promising to try for any
mixed-integer minimization formulation with convex relaxation for which the 
relaxation bound is close to an upper bound given by some feasible solution of the formulation; in such a situation, reduced-cost fixing is likely to already have
some effectiveness, and strong fixing is likely to improve the situation.

\smallskip 
\noindent {\emph{Scaling to larger instances.}}
While we can solve rather large instances of  \ref{scp}, 
it remains a challenge to solve comparably large instances of \ref{CP2}.

\smallskip
\noindent {\emph{Application.}} The development and deployment of eVTOLs is at this moment still on the horizon. Our motivation for the development of  optimization methods that we have developed  continue to develop, is so that the optimization community can be at-the-ready for helping UAM systems become a cost effective and safe reality.
We believe that we have good evidence now that integer programming can play a key role. The applied value of our progress so far and further challenges will mainly be known as the technology is deployed. 
Time will tell. 


   \section*{Acknowledgments}
This work was  supported in part by  NSF grant DMS-1929284 while 
C. D'Ambrosio, M. Fampa and J. Lee were in residence at 
the Institute for Computational and Experimental Research in Mathematics (ICERM)
at Providence, RI, during the Discrete Optimization 
semester 
program, 2023.
C. D'Ambrosio was supported by the Chair ``Integrated Urban Mobility'', backed by L’X - \'Ecole Polytechnique and La Fondation de l’\'Ecole Polytechnique (The Partners of the Chair 
accept no liability related to this publication, for which the chair holder is solely liable).
M. Fampa was supported  by  CNPq grant 307167/2022-4.
J. Lee was supported by 
the Gaspard Monge Visiting Professor Program (\'Ecole Polytechnique), and from 
 ONR grants N00014-21-1-2135 and N00014-24-1-2694. F. Sinnecker was supported on a master's scholarship from CNPq. 
  J. Lee and M. Fampa acknowledge (i) helpful 
 conversations at ICERM with Zhongzhu Chen, on  variable fixing, and (ii) some helpful information 
 from Tobias Achterberg on \texttt{Gurobi} presolve. 
 The authors acknowledge the Centro Brasileiro de Pesquisas Físicas (CBPF/MCTI) for the computational support, particularly the access to high-performance systems used in the calculations performed in this work. This support was essential for the development and completion of the analyses presented.


%
%
%
\bibliographystyle{splncs04}
\bibliography{covering}

\appendix

\section{Proof of Thm. \ref{thm:fix_dopt}}\label{app:fixSCP0}
To prove Thm. \ref{thm:fix_dopt},  we consider a modified version of \ref{scp} where we add to it  the  constraint 
\begin{equation}\label{absurdineq}
     z_{\hat\jmath} \geq  \textstyle \left\lfloor
  \frac{UB-\hat{u}^\top\mathbf{e}}{w_{\hat\jmath} - \hat{u}^\top A_{\cdot {\hat\jmath}}}\right\rfloor +1,
\end{equation}
for some  ${\hat\jmath} \in \{1,\ldots,n\}$ such that  $w_{\hat\jmath} - \hat{u}^\top A_{\cdot {\hat\jmath}}>0$. In this case, the only difference in the dual of the continuous relaxation of the modified problem with respect to \ref{d} is the addition of the new dual variable $\sigma \in\mathbb{R}$ corresponding to this added constraint to the dual constraint corresponding to  the variable $z_{\hat\jmath}$\,, which becomes 
\[
\textstyle u^\top A_{\cdot j} + \sigma \leq w_j\,,
\]
and the addition of the term corresponding to $\sigma$ to the objective function, which becomes 
\[
\textstyle u^\top \mathbf{e} + \left(  \left\lfloor
  \frac{UB-\hat{u}^\top\mathbf{e}}{w_j - \hat{u}^\top A_{\cdot j}}\right\rfloor +1\right) \sigma.
\]
We note that $(\hat u, \hat\sigma:=w_{\hat\jmath} - \hat{u}^\top A_{\cdot \hat\jmath} )$ is feasible to the modified dual problem with objective value 
\[
\textstyle \hat{u}^\top \mathbf{e} + \left(  \left\lfloor
  \frac{UB-\hat{u}^\top\mathbf{e}}{w_j - \hat{u}^\top A_{\cdot j}}\right\rfloor +1\right) \left(w_{\hat\jmath} - \hat{u}^\top A_{\cdot {\hat\jmath}}\right),
\]
 which is a lower bound for the optimal value of the modified \ref{scp} that is strictly greater than the upper bound UB for the objective value of \ref{scp}. We conclude that no optimal solution to \ref{scp} can satisfy \eqref{absurdineq}.
\qed

\section{Proof of Thm. \ref{thm:fix_doptat1} }\label{app:fixSCP}

To prove Thm. \ref{thm:fix_doptat1},  we consider a modified version of \ref{scp} where we add to it  the  constraint $z_{\hat\jmath}=0$, for some  $\hat\jmath\in \{1,\ldots,n\}$\,. In this case, the only difference in the dual of the continuous relaxation of the modified problem with respect to \ref{dplus} is the addition of the new dual variable $\sigma \in\mathbb{R}$ corresponding to this added constraint to the dual constraint corresponding to  the variable $z_{\hat\jmath}$\,, which becomes 
\[
A_{\cdot j}^\top u - v_j + \sigma \leq w_j\,.
\]

We consider that $(\hat{u},\hat{v})$  
is feasible to \ref{dplus} with objective value $\hat{u}^\top \mathbf{e} - \hat{v}^\top\mathbf{e}$, and we define 
$\tilde{v}_{j}:=\hat{v}_{j} + \sigma$, if $j=\hat\jmath$, and 
$\tilde{v}_{j}:=\hat{v}_{j}$\,, otherwise.   Then $(\hat{u},\tilde{v},\sigma)$ is a feasible solution to the modified dual problem with objective value $\hat{u}^\top \mathbf{e} - \hat{v}^\top\mathbf{e} - \sigma$, if $\hat{v}_{\hat\jmath} + \sigma\geq 0$. To maximize the objective of the  modified dual, we take $\sigma =   -\hat{v}_{\hat\jmath}$\,, which gives a lower bound for the optimal value of the modified \ref{scp} equal to  $\hat{u}^\top \mathbf{e} - \hat{v}^\top\mathbf{e} + \hat{v}_{\hat\jmath}$\,. If this lower bound is strictly greater than a  given upper bound UB for the objective value of \ref{scp}, we conclude that no optimal solution to \ref{scp} can have $z_{\hat\jmath}=0$.
\qed

\section{Lagrangian duality for \ref{CP2}}\label{app:dualCP2}

We assume here that  $Q_j$ is a polytope. So, for given $Q_{1j}\,,Q_{2j}\in \mathbb{R}^d$, we have 
$
Q_j:=\{x\in\mathbb{R}^d~:~ Q_{2j}\leq x \leq Q_{1j}\},
$ 
 for all  $j \in N$.
 
We consider the continuous relaxation of  \ref{CP2}, given by 
\begin{equation}\tag{$\overline{\mbox{CP2}}$}\label{RCP2}
    \begin{array}{lll}
    \min&\!\! \sum_{j\in N}  w_j z_j\\
     
    \mbox{s.t.}&\!\!z_j\geq y_{ij},& \forall i \!\in \!\mathcal{I}(H), j\!\in\! N_i\,;\\
   &\!\! \sum_{j\in N_i} 
   y_{ij} \geq 1, &\forall i \!\in\! \mathcal{I}(H);\\
   &\!\!\|x_j-a_{i(k)}\|\!\leq \!r_j \!  +\!M_{ijk}(1 - y_{ij}),
   &  \forall  i\!\in\! \mathcal{I}(H), j\!\in \!N_i\,,  k\!\in\!\{1,2\};\\
   &\!\!y_{ij}\geq 0,& \forall i\!\in\! \mathcal{I}(H),  j\!\in\! N_i\,;\\
   &\!\!Q_{2j}\leq x_j \leq Q_{1j}\,, &\forall j \!\in\! N;\\
   &\!\!z_j\in\mathbb{R}, ~ x_j\in  \mathbb{R}^d, &\forall j\! \in\! N,\\
   &\!\!y_{ij} \in \mathbb{R}, &  \forall~i\in \mathcal{I}(H),  \forall j\!\in \!N_i\,.
\end{array}\end{equation}
We introduce the variable $\rho_{ijk} \in \mathbb{R}^d$, defined as  
\begin{align*}
   & \rho_{ijk} := x_j - a_{i(k)}\,,\qquad \forall  ~i:=[a_{i(1)},a_{i(2)}]\in \mathcal{I}(H), ~ j\in N_i\,, ~ k\in\{1,2\}.
   \end{align*}
Then, the Lagrangian function associated to \ref{RCP2} is:
\begin{align*}
&L(z, y, x, \rho ; \lambda, \mu, \nu, \gamma, \beta ,\theta ) :=\\ 
&\quad \sum_{j \in N} w_j z_j + \sum_{i \in \mathcal{I}(H)} \sum_{j \in N_i} \lambda_{ij} (y_{ij} - z_j)  + \sum_{i \in \mathcal{I}(H)} \mu_i \left(1 - \sum_{j \in N_i} y_{ij} \right) \\
&\quad + \sum_{i \in \mathcal{I}(H)} \sum_{j \in N_i} \sum_{k=1}^2 \nu_{ijk} \left( \|\rho_{ijk}\| - r_j - M_{ijk}(1 - y_{ij}) \right)\\
&\quad + \sum_{i \in \mathcal{I}(H)} \sum_{j \in N_i} \sum_{k=1}^2 \gamma_{ijk}^\top(\rho_{ijk}-x_j+a_{i(k)}) - \beta_{ij}y_{ij}\\
&\quad + \sum_{j\in N} \left(\theta_{1j}^\top(x_j-Q_{1j}) + \theta_{2j}^\top (Q_{2j}-x_j)\right),
\end{align*}
or equivalently,
\begin{align*}
&L(z, y, x, \rho ; \lambda, \mu, \nu, \gamma, \beta ,\theta ) := \\
&\qquad  \sum_{j \in N} w_jz_j -\sum_{i \in \mathcal{I}(H)}\sum_{j \in N_i}  \lambda_{ij} z_j \\
&\qquad + \sum_{i \in \mathcal{I}(H)} \sum_{j \in N_i} \left(\lambda_{ij}-\mu_i + \sum_{k=1}^2\nu_{ijk} M_{ijk} -\beta_{ij}\right) y_{ij}\\
&\qquad+ \sum_{i \in \mathcal{I}(H)}  \sum_{j \in N_i} \sum_{k=1}^2 v_{ijk} ||\rho_{ijk}||+ \gamma_{ijk}^\top \rho_{ijk}\\
&\qquad+\sum_{j\in N} \left(\theta_{1j} -\theta_{2j}\right)^\top x_j -\sum_{i \in \mathcal{I}(H)} \sum_{j \in N_i}\sum_{k=1}^2\gamma_{ijk}^{\top} x_j + \sum_{i \in \mathcal{I}(H)} \mu_i\\
&\qquad - \sum_{i \in \mathcal{I}(H)} \sum_{j \in N_i} \sum_{k=1}^2 \nu_{ijk} D_{ijk}+ \sum_{i \in \mathcal{I}(H)} \sum_{j \in N_i} \sum_{k=1}^2 \gamma_{ijk}^\top a_{i(k)} \\
&\qquad + \sum_{j\in N} \theta_{2j}^\top Q_{2j}-\theta_{1j}^\top Q_{1j}\,.
\end{align*}
We define  $\mathcal{I}^j\subset \mathcal{I}(H)$, for all $j\in N$, as 
$
\mathcal{I}^j:=\{i\in\mathcal{I}(H) ~:~j\in N_i\}.
$
To obtain the infimum of the Lagrangian function with respect to $(z, y, x, \rho )$, we note that 
\begin{align*}
    \inf_z &\sum_{j \in N} \left(w_j -\sum_{i \in \mathcal{I}^j}  \lambda_{ij}\right) z_j \\
    &=\begin{cases}
        0,& \mbox{if } w_j  -\sum_{i \in \mathcal{I}^j}  \lambda_{ij}= 0,\quad \forall j \in N;\\
        -\infty,&\mbox{otherwise,}
    \end{cases}\\
    \inf_y&\sum_{i \in \mathcal{I}(H)} \sum_{j \in N_i} \left(\lambda_{ij}-\mu_i + \sum_{k=1}^2\nu_{ijk} M_{ijk}-\beta_{ij}\right) y_{ij} \\&=\begin{cases}
        0,&\mbox{if } \lambda_{ij}-\mu_i + \sum_{k=1}^2\nu_{ijk} M_{ijk}-\beta_{ij}=  0,\quad \forall  i\in \mathcal{I}(H),~j \in N_i\,;\\
        -\infty,& \mbox{otherwise,}
    \end{cases}\\
    \inf_x & \sum_{j\in N} \left(\theta_{1j} -\theta_{2j} 
    -
    \sum_{i \in \mathcal{I}^j}  \sum_{k=1}^2\gamma_{ijk} \right)^\top x_j  
    \\
    &=\begin{cases}
        0,&\mbox{if } \theta_{1j} -\theta_{2j} -\left(\sum_{i \in \mathcal{I}^j} \sum_{k=1}^2\gamma_{ijk}\right) =  0,\quad \forall j \in N;\\
        -\infty,&\mbox{otherwise,}
    \end{cases}\\
    \inf_{\rho} &  \sum_{i \in \mathcal{I}(H)} \sum_{j \in N_i}\sum_{k=1}^2\nu_{ijk} ||\rho_{ijk}||+ \gamma_{ijk}^\top \rho_{ijk}    \\
    &=\begin{cases}
        0,&\mbox{if } ||\gamma_{ijk}||\leq \nu_{ijk}\,,\quad \forall  i\in \mathcal{I}(H),~j \in N_i\,,~k\in \{1,2\};\\
        -\infty, & \mbox{otherwise.}
    \end{cases}
\end{align*}
Then, the Lagrangian dual problem of \ref{RCP2} can then be formulated as 
\begin{equation}\tag{$\mbox{D2}$}\label{DRCP2app}
\begin{array}{lll}
    \!\!\max &\multicolumn{2}{l}{\!\displaystyle\sum_{j\in N} (\theta_{2j}^\top Q_{2j}-\theta_{1j}^\top Q_{1j}) + \!\!\sum_{i \in \mathcal{I}(H)} \mu_i + \sum_{j \in N_i}\sum_{k=1}^2 (\gamma_{ijk}^\top a_{i(k)} - \nu_{ijk} D_{ijk})  ) }  \\
    \!\text{s.t.}\!\!&\! w_j- \displaystyle\sum_{i \in \mathcal{I}^j} \lambda_{ij} = 0,&        \!\!\!\!\forall j \in N;\\
     &\!\lambda_{ij}\!-\!\mu_i \!+ \!\displaystyle\sum_{k=1}^2\nu_{ijk} M_{ijk}\! -\!\beta_{ij}=  0,&   \!\!\!\!\forall  i\in \mathcal{I}(H),   j \in N_i\,;\\
    & \!\theta_{1j} -\theta_{2j} -\displaystyle\sum_{i \in \mathcal{I}^j} \displaystyle\sum_{k=1}^2\gamma_{ijk}=  0,&    \!\!\!\!\forall   j \in N;\\
    &\!||\gamma_{ijk}||\leq \nu_{ijk}\,,&    \!\!\!\!\forall    i\in \mathcal{I}(H),j \in N_i\,,\,k\in \{1,2\};\\
    &\!\lambda_{ij},\mu_i,\nu_{ijk},\beta_{ij},\gamma_{ijk} \geq 0,&    \!\!\!\!\forall   i\in \mathcal{I}(H),j \in N_i\,,\,k\in \{1,2\};\\
    &\!\theta_{1j},\theta_{2j} \geq 0,&    \!\!\!\!\forall j \in N;\\
    &\!\lambda_{ij},\mu_i,\nu_{ijk},\beta_{ij} \in \mathbb{R},&    \!\!\!\!\forall   i\in \mathcal{I}(H),j \in N_i\,,\,k\in \{1,2\};\\
    &\!\gamma_{ijk} \in \mathbb{R}^d,&      \!\!\!\!\forall i\in \mathcal{I}(H),j \in N_i\,,\,  k\in \{1,2\};\\
    &\!\theta_{1j},\theta_{2j} \in \mathbb{R}^d,&  \!\!\!\!\forall   j \in N.
    \end{array}
    \end{equation}

\section{Proof of Thm. \ref{thm:fix_dopt2} }\label{app:fixCP2}

To prove Thm. \ref{thm:fix_dopt2}, we consider a modified version of \ref{CP2} where we add to it  the  constraint $y_{\hat\imath\hat\jmath}=1$, for some $\hat\imath\in\mathcal{I}(H)$ and $\hat\jmath\in N_i$\,. In this case, the only difference on the Lagrangian dual of the continuous relaxation of the modified problem with respect to \ref{DRCP2} is the subtraction of the new dual variable $\upsilon \in\mathbb{R}$ corresponding to this added constraint, from the objective function, and its addition to  the dual constraint corresponding to  the variable $y_{\hat\imath\hat\jmath}$\,, which becomes 
\[
\lambda_{\hat\imath\hat\jmath}-\mu_{\hat\imath} + \sum_{k=1}^2\nu_{\hat\imath\hat\jmath k} M_{\hat\imath\hat\jmath k} -\beta_{\hat\imath\hat\jmath} + \upsilon=  0.
\]

We consider that $(\hat{\lambda},\hat{\mu},\hat{\nu},\hat\beta,\hat{\gamma},\hat{\theta}_1,\hat{\theta}_2)$  is feasible to \ref{DRCP2} with objective value $\hat\xi$, and we define $\tilde\beta_{ij}:=\hat\beta_{ij} + \upsilon$, if $(i,j)=(\hat\imath,\hat\jmath)$, and $\tilde\beta_{ij}:=\hat\beta_{ij}$\,, otherwise.   Then $(\hat{\lambda},\hat{\mu},\hat{\nu},\tilde\beta,\hat{\gamma},\hat{\theta}_1,\hat{\theta}_2,\upsilon)$ is a feasible solution to the modified dual problem with objective value $\hat\xi -\upsilon$, if $\hat\beta_{\hat\imath\hat\jmath} + \upsilon\geq 0$. To maximize the objective of the  modified dual, we take $\upsilon = -  \hat\beta_{\hat\imath\hat\jmath}$\,, which gives a lower bound for the optimal value of the modified \ref{CP2} equal to  $\hat\xi + \hat\beta_{\hat\imath\hat\jmath}$\,. If this lower bound is strictly greater than a  given upper bound UB for the objective value of \ref{CP2}, we conclude that no optimal solution to \ref{CP2} can have $y_{\hat\imath\hat\jmath}=1$. \qed

\section{Lagrangian duality for \ref{CP2} with redundant constraints}\label{app:dualCP2zj}

We consider the continuous relaxation of  \ref{CP2} with the redundant constraints $0\leq z\leq\mathbf{e}$, given by 
\begin{equation}\tag{$\overline{\mbox{CP2}}^+$}\label{RCP2zj}
    \begin{array}{lll}
  \min&\!\! \sum_{j\in N}  w_j z_j\\
     
    \mbox{s.t.}&\!\!z_j\geq y_{ij}\,,& \forall i \!\in \!\mathcal{I}(H), j\!\in\! N_i\,;\\
   &\!\! \sum_{j\in N_i} 
   y_{ij} \geq 1, &\forall i \!\in\! \mathcal{I}(H);\\
   &\!\!\!\|x_j-a_{i(k)}\|\!\leq \!r_j \!  +\!M_{ijk}(1 \!- \!y_{ij}),
   &  \forall  i\!\in\! \mathcal{I}(H), j\!\in \!N_i\,,  k\!\in\!\{1,2\};\\
   &\!\!y_{ij}\geq 0,& \forall i\!\in\! \mathcal{I}(H),  j\!\in\! N_i\,;\\
   &\!\!Q_{2j}\leq x_j \leq Q_{1j}\,, &\forall j \!\in\! N;\\
   &\!\!0\leq z_j\leq 1
   , ~ x_j\in  \mathbb{R}^d, &\forall j\! \in\! N,\\
   &\!\!y_{ij} \in \mathbb{R}, &  \forall~i\in \mathcal{I}(H),  \forall j\!\in \!N_i\,.
\end{array}\end{equation}
With the same analysis of \ref{app:dualCP2} and considering $\phi,\delta\in\mathbb{R}^n$ as the Lagrangian multipliers associated to the new constraints $z\geq 0$ and $z\leq \mathbf{e}$, we formulate the 
the Lagrangian dual problem of \ref{RCP2zj} 
as 
\begin{equation}\tag{$\mbox{D2$^+$}$}\label{DRCP2zjapp}
\begin{array}{lll}
    \!\!\!\!\max &\multicolumn{2}{l}{\!\!\!\!\!\displaystyle\sum_{j\in N} (\theta_{2j}^\top Q_{2j}\!-\!\theta_{1j}^\top Q_{1j}\!-\!\delta_j)\! +\! \!\!\sum_{i \in \mathcal{I}(H)} \!\!\!\mu_i\! +\!\! \sum_{j \in N_i}\sum_{k=1}^2 (\gamma_{ijk}^\top a_{i(k)} \!-\! \nu_{ijk} D_{ijk})  ) }  \\
    \text{s.t.}&\!\!\! w_j-\phi_j+\delta_j- \displaystyle\sum_{i \in \mathcal{I}^j} \lambda_{ij} = 0,&        \!\!\!\!\forall j \in N;\\
     &\!\!\!\lambda_{ij}\!-\!\mu_i + \!\displaystyle\sum_{k=1}^2\nu_{ijk} M_{ijk}\! -\!\beta_{ij}=  0,&   \!\!\!\!\forall  i\in \mathcal{I}(H),   j \in N_i\,;\\
    & \!\!\!\theta_{1j} -\theta_{2j} -\displaystyle\sum_{i \in \mathcal{I}^j} \displaystyle\sum_{k=1}^2\gamma_{ijk}=  0,&    \!\!\!\!\forall   j \in N;\\
    &\!\!\!||\gamma_{ijk}||\leq \nu_{ijk}\,,&    \!\!\!\!\forall    i\in \mathcal{I}(H),j \in N_i\,,\,k\in \{1,2\};\\
    &\!\!\!\lambda_{ij},\mu_i,\nu_{ijk},\beta_{ij},\gamma_{ijk} \geq 0,&    \!\!\!\!\forall   i\in \mathcal{I}(H),j \in N_i\,,\,k\in \{1,2\};\\
    &\!\!\!\theta_{1j},\theta_{2j},\phi_j,\delta_j \geq 0,&    \!\!\!\!\forall j \in N;\\
    &\!\!\!\lambda_{ij},\mu_i,\nu_{ijk},\beta_{ij} \in \mathbb{R},&    \!\!\!\!\forall   i\in \mathcal{I}(H),j \in N_i\,,\,k\in \{1,2\};\\
    &\!\!\!\gamma_{ijk} \in \mathbb{R}^d,&      \!\!\!\!\forall i\in \mathcal{I}(H),j \in N_i\,,\,  k\in \{1,2\};\\
    &\!\!\!\theta_{1j},\theta_{2j} \in \mathbb{R}^d,&  \!\!\!\!\forall   j \in N.
    \end{array}
    \end{equation}

\section{Proof of Thm. \ref{thm:fix_dopt2zj} }\label{app:fixCP2zj}

To prove Thm. \ref{thm:fix_dopt2zj}, we first consider a modified version of \ref{CP2} where we add to it  the  constraint $z_{\hat\jmath}=1$, for some  $\hat\jmath\in N_i$\,. In this case, the only difference on the Lagrangian dual of the continuous relaxation of the modified problem with respect to \ref{DRCP2zjapp} is the subtraction of the new dual variable $\upsilon \in\mathbb{R}$ corresponding to this added constraint, from the objective function, and its addition to the dual constraint corresponding to  the variable $z_{\hat\jmath}$, which becomes 
\[
w_{\hat\jmath}-\phi_{\hat\jmath}+\delta_{\hat\jmath}- \displaystyle\sum_{i \in \mathcal{I}^{\hat\jmath}} \lambda_{i{\hat\jmath}} +\upsilon= 0.
\]

We consider that $(\hat{\lambda},\hat{\mu},\hat{\nu},\hat\beta,\hat{\gamma},\hat{\theta}_1,\hat{\theta}_2,\hat{\phi},\hat{\delta})$  
is feasible to \ref{DRCP2zjapp} with objective value $\hat\xi$, and we define 
$\tilde{\phi}_{j}:=\hat{\phi}_{j} + \upsilon$, if $j=\hat\jmath$, and 
$\tilde{\phi}_{j}:=\hat{\phi}_{j}$\,, otherwise.   Then $(\hat{\lambda},\hat{\mu},\hat{\nu},\hat\beta,\hat{\gamma},\hat{\theta}_1,\hat{\theta}_2,\tilde{\phi},\hat{\delta},\upsilon)$ is a feasible solution to the modified dual problem with objective value $\hat\xi -\upsilon$, if $\hat{\phi}_{\hat\jmath} + \upsilon\geq 0$. To maximize the objective of the  modified dual, we take $\upsilon = -  \hat{\phi}_{\hat\jmath}$\,, which gives a lower bound for the optimal value of the modified \ref{CP2} equal to  $\hat\xi + \hat{\phi}_{\hat\jmath}$\,. If this lower bound is strictly greater than a  given upper bound UB for the objective value of \ref{CP2}, we conclude that no optimal solution to \ref{CP2} can have $z_{\hat\jmath}=1$.

Next, we consider a modified version of \ref{CP2} where we add to it  the  constraint $z_{\hat\jmath}=0$, for some  $\hat\jmath\in N_i$\,. In this case, the only difference on the Lagrangian dual of the continuous relaxation of the modified problem with respect to \ref{DRCP2zjapp} is the addition of the new dual variable $\upsilon \in\mathbb{R}$ corresponding to this added constraint to the dual constraint corresponding to the variable $z_{\hat\jmath}$, which becomes 
\[
w_{\hat\jmath}-\phi_{\hat\jmath}+\delta_{\hat\jmath}- \displaystyle\sum_{i \in \mathcal{I}^{\hat\jmath}} \lambda_{i{\hat\jmath}} +\upsilon= 0.
\]

We consider that $(\hat{\lambda},\hat{\mu},\hat{\nu},\hat\beta,\hat{\gamma},\hat{\theta}_1,\hat{\theta}_2,\hat{\phi},\hat{\delta})$  
is feasible to \ref{DRCP2zjapp} with objective value $\hat\xi$, and we define 
$\tilde{\delta}_{j}:=\hat{\delta}_{j} - \upsilon$, if $j=\hat\jmath$, and 
$\tilde{\delta}_{j}:=\hat{\delta}_{j}$\,, otherwise.   Then $(\hat{\lambda},\hat{\mu},\hat{\nu},\hat\beta,\hat{\gamma},\hat{\theta}_1,\hat{\theta}_2,\hat{\phi},\tilde{\delta},\upsilon)$ is a feasible solution to the modified dual problem with objective value $\hat\xi +\upsilon$, if $\hat{\delta}_{\hat\jmath} - \upsilon\geq 0$. To maximize the objective of the  modified dual, we take $\upsilon =   \hat{\delta}_{\hat\jmath}$\,, which gives a lower bound for the optimal value of the modified \ref{CP2} equal to  $\hat\xi + \hat{\delta}_{\hat\jmath}$\,. If this lower bound is strictly greater than a  given upper bound UB for the objective value of \ref{CP2}, we conclude that no optimal solution to \ref{CP2} can have $z_{\hat\jmath}=0$.
\qed

\end{document}